\newcommand{\cB}{\mathcal{B}}
\newcommand{\cC}{\mathcal{C}}
\newcommand{\cD}{\mathcal{D}}
\newcommand{\cL}{\mathcal{L}}
\newcommand{\cM}{\mathcal{M}}
\newcommand{\cN}{\mathcal{N}}
\newcommand{\cR}{\mathcal{R}}
\newcommand{\cW}{\mathcal{W}}
\theoremstyle{theorem}
\newtheorem{Prop}{Proposition}[section]
\newtheorem{Lem}[Prop]{Lemma}
\newtheorem{Thm}[Prop]{Theorem}
\newtheorem{Cor}[Prop]{Corollary}
\theoremstyle{definition}
\newtheorem{Def}[Prop]{Definition}
\newtheorem{Rem}[Prop]{Remark}
\newtheorem{Ex}[Prop]{Example}
\newcommand{\N}{{\mathbb{N}}}
\newcommand{\R}{{\mathbb{R}}}
\newcommand{\C}{{\mathbb{C}}}
\newcommand{\Gl}{\mathbf{Gl}}
\DeclareMathOperator{\im}{im}
\DeclareMathOperator{\rk}{rk}
\DeclareMathOperator{\diag}{diag}
\newcommand{\ddt}{\tfrac{\text{\normalfont d}}{\text{\normalfont d}t}}
\newcommand{\ds}[1]{{\rm \, d} #1 \,}
\newcommand{\setdef}[2]{\left\{\ #1\ \left|\ \vphantom{#1} #2\ \right.\right\}}
\DeclareMathOperator{\sgn}{sgn}
\DeclareMathOperator{\loc}{loc}
\DeclareMathOperator{\sat}{sat}
\DeclareMathOperator{\adj}{adj}
\DeclareMathOperator{\erf}{erf}
\DeclareMathOperator{\erfc}{erfc}
\newlength{\innersep}
\newlength{\maxlength}
\newlength{\dummylength}
\newcommand{\JordanBlock}[3]{
\setlength{\arraycolsep}{0pt}
\renewcommand{\arraystretch}{0}
\settowidth{\maxlength}{$#1$}
\settoheight{\dummylength}{$#1$}
\ifdim\dummylength>\maxlength
  \setlength{\maxlength}{\dummylength}
\fi
\settowidth{\dummylength}{$#2$}
\ifdim\dummylength>\maxlength
  \setlength{\maxlength}{\dummylength}
\fi
\settoheight{\dummylength}{$#2$}
\ifdim\dummylength>\maxlength
  \setlength{\maxlength}{\dummylength}
\fi
\setlength{\innersep}{0.1\maxlength}
\addtolength{\maxlength}{\innersep}
\addtolength{\maxlength}{\innersep}
\newcommand{\invisiblebox}{\phantom{\rule{\maxlength}{\maxlength}}}
\begin{array}{ccc}
  {\tikz[remember picture] \node[outer sep=0,inner sep=\innersep] (a11) {$#1$};} & \invisiblebox & {\tikz[remember picture] \node[outer sep=0,inner sep=\innersep] (a13) {$#1$};}\\
   \invisiblebox &\phantom{\rule{#3}{#3}} & \invisiblebox \\
   {\tikz[remember picture] \node[outer sep=0,inner sep=\innersep] (a31) {$#2$};} & \invisiblebox & {\tikz[remember picture] \node[outer sep=0,inner sep=\innersep] (a33) {$#1$};}
\end{array}
\tikz[remember picture, overlay] \draw (a11) edge[very thick] (a33);
%\tikz[remember picture, overlay] \draw (a21) edge[very thick] (a43);
}
\newcommand{\UnityBlock}[3]{
\setlength{\arraycolsep}{0pt}
\renewcommand{\arraystretch}{0}
\settowidth{\maxlength}{$#1$}
\settoheight{\dummylength}{$#1$}
\ifdim\dummylength>\maxlength
  \setlength{\maxlength}{\dummylength}
\fi
\settowidth{\dummylength}{$#2$}
\ifdim\dummylength>\maxlength
  \setlength{\maxlength}{\dummylength}
\fi
\settoheight{\dummylength}{$#2$}
\ifdim\dummylength>\maxlength
  \setlength{\maxlength}{\dummylength}
\fi
\setlength{\innersep}{0.1\maxlength}
\addtolength{\maxlength}{\innersep}
\addtolength{\maxlength}{\innersep}
\newcommand{\invisiblebox}{\phantom{\rule{\maxlength}{\maxlength}}}
\begin{array}{ccc}
  {\tikz[remember picture] \node[outer sep=0,inner sep=\innersep] (a11) {$#1$};} & \invisiblebox & \invisiblebox\\
   \invisiblebox &\phantom{\rule{#3}{#3}} & \invisiblebox \\
   \invisiblebox & \invisiblebox & {\tikz[remember picture] \node[outer sep=0,inner sep=\innersep] (a33) {$#1$};}
\end{array}
\tikz[remember picture, overlay] \draw (a11) edge[very thick] (a33);
%\tikz[remember picture, overlay] \draw (a21) edge[very thick] (a43);
}
\newcommand{\ReverseUnityBlock}[3]{
\setlength{\arraycolsep}{0pt}
\renewcommand{\arraystretch}{0}
\settowidth{\maxlength}{$#1$}
\settoheight{\dummylength}{$#1$}
\ifdim\dummylength>\maxlength
  \setlength{\maxlength}{\dummylength}
\fi
\settowidth{\dummylength}{$#2$}
\ifdim\dummylength>\maxlength
  \setlength{\maxlength}{\dummylength}
\fi
\settoheight{\dummylength}{$#2$}
\ifdim\dummylength>\maxlength
  \setlength{\maxlength}{\dummylength}
\fi
\setlength{\innersep}{0.1\maxlength}
\addtolength{\maxlength}{\innersep}
\addtolength{\maxlength}{\innersep}
\newcommand{\invisiblebox}{\phantom{\rule{\maxlength}{\maxlength}}}
\begin{array}{ccc}
   \invisiblebox & \invisiblebox & {\tikz[remember picture] \node[outer sep=0,inner sep=\innersep] (a13) {$#1$};}\\
   \invisiblebox &\phantom{\rule{#3}{#3}} & \invisiblebox \\
   {\tikz[remember picture] \node[outer sep=0,inner sep=\innersep] (a31) {$#1$};} &\invisiblebox & \invisiblebox
\end{array}
\tikz[remember picture, overlay] \draw (a13) edge[very thick] (a31);
%\tikz[remember picture, overlay] \draw (a21) edge[very thick] (a43);
}
\newcommand{\LowerNilBlock}[3]{
\setlength{\arraycolsep}{0pt}
\renewcommand{\arraystretch}{0}
\settowidth{\maxlength}{$#1$}
\settoheight{\dummylength}{$#1$}
\ifdim\dummylength>\maxlength
  \setlength{\maxlength}{\dummylength}
\fi
\settowidth{\dummylength}{$#2$}
\ifdim\dummylength>\maxlength
  \setlength{\maxlength}{\dummylength}
\fi
\settoheight{\dummylength}{$#2$}
\ifdim\dummylength>\maxlength
  \setlength{\maxlength}{\dummylength}
\fi
\setlength{\innersep}{0.1\maxlength}
\addtolength{\maxlength}{\innersep}
\addtolength{\maxlength}{\innersep}
\newcommand{\invisiblebox}{\phantom{\rule{\maxlength}{\maxlength}}}
\begin{array}{cccc}
  \tikz[remember picture] \node[outer sep=0,inner sep=\innersep] (a11) {$#1$}; &  & \invisiblebox & \invisiblebox\\
  {\tikz[remember picture] \node[outer sep=0,inner sep=\innersep] (a21) {$#2$};}&  & \invisiblebox & \invisiblebox\\
   & \phantom{\rule{#3}{#3}} &  & \\
   \invisiblebox & &  {\tikz[remember picture] \node[outer sep=0,inner sep=\innersep] (a43) {$#2$};} &
   {\tikz[remember picture] \node[outer sep=0,inner sep=\innersep] (a44) {$#1$};}
\end{array}
\tikz[remember picture, overlay] \draw (a11) edge[very thick] (a44);
\tikz[remember picture, overlay] \draw (a21) edge[very thick] (a43);
}
\newcommand{\RectBlock}[3]{
\setlength{\arraycolsep}{0pt}
\renewcommand{\arraystretch}{0}
\settowidth{\maxlength}{$#1$}
\settoheight{\dummylength}{$#1$}
\ifdim\dummylength>\maxlength
  \setlength{\maxlength}{\dummylength}
\fi
\settowidth{\dummylength}{$#2$}
\ifdim\dummylength>\maxlength
  \setlength{\maxlength}{\dummylength}
\fi
\settoheight{\dummylength}{$#2$}
\ifdim\dummylength>\maxlength
  \setlength{\maxlength}{\dummylength}
\fi
\setlength{\innersep}{0.1\maxlength}
\addtolength{\maxlength}{\innersep}
\addtolength{\maxlength}{\innersep}
\newcommand{\invisiblebox}{\phantom{\rule{\maxlength}{\maxlength}}}
\begin{array}{ccccc}
  \tikz[remember picture] \node[outer sep=0,inner sep=\innersep] (a11) {$#1$}; &{\tikz[remember picture] \node[outer sep=0,inner sep=\innersep] (a21) {$#2$};}& & \invisiblebox & \invisiblebox\\
  &&\phantom{\rule{#3}{#3}} &&\\
   %\invisiblebox &\invisiblebox& & & \\
   \invisiblebox &\invisiblebox& &
   {\tikz[remember picture] \node[outer sep=0,inner sep=\innersep] (a44) {$#1$};} & {\tikz[remember picture] \node[outer sep=0,inner sep=\innersep] (a43) {$#2$};}
\end{array}
\tikz[remember picture, overlay] \draw (a11) edge[very thick] (a44);
\tikz[remember picture, overlay] \draw (a21) edge[very thick] (a43);
}
\newenvironment{smallpmatrix}%          environment name
{\left(\begin{smallmatrix}}%            begin code
{\end{smallmatrix}\right)}%             end code
\newenvironment{smallbmatrix}%          environment name
{\left[\begin{smallmatrix}}%            begin code
{\end{smallmatrix}\right]}%             end code
\renewcommand*\env@matrix[1][*\c@MaxMatrixCols c]{%
  \hskip -\arraycolsep
  \let\@ifnextchar\new@ifnextchar
  \array{#1}}
\begin{document}

\title{Fault tolerant funnel control for uncertain linear systems\thanks{This work was supported by the German Research Foundation (Deutsche Forschungsgemeinschaft) via the grant BE 6263/1-1. Furthermore, the author thanks Achim Ilchmann (Technische Universtit\"at Ilmenau) for several constructive discussions.}}

\author{Thomas Berger% <-this % stops a space
\thanks{Thomas Berger is with the Institut f\"ur Mathematik, Universit\"at Paderborn, Warburger Str.~100, 33098~Paderborn, Germany, {\tt\small thomas.berger@math.upb.de}}}

\date{\today}
\maketitle

%
%%%%%%%%%%%%%%%%%%% Abstract %%%%%%%%%%%%%%%%%%%%%%%%%%%%%%%%%%%%%%%%%%%%%%%%%%%%%%%%%%%%%%%%%%%%%%%%%%%%%%%%
%
\begin{abstract}
We study adaptive fault tolerant tracking control for uncertain linear systems. Based on recent results in funnel control and the time-varying Byrnes-Isidori form, we develop a low-complexity model-free controller which achieves prescribed performance of the tracking error for any given sufficiently smooth reference signal. Within the considered system class, we allow for more inputs than outputs as long as a certain redundancy of the actuators is satisfied. An important role in the controller design is played by the controller weight matrix. This is a rectangular input transformation chosen such that in the resulting system the zero dynamics, which are assumed to be uniformly exponentially stable, are independent of the new input. We illustrate the fault tolerant funnel controller by an example of a linearized model for the lateral motion of a Boeing~737 aircraft.
\end{abstract}
\begin{keywords}
Linear systems,
fault tolerant control,
model-free control,
funnel control,
relative degree.
\end{keywords}

%
%%%%%%%%%%%%%%%%%%%%%%%%%%%%%%%%%%%%%%%%%%%%%%%%%%%%%%%%%%%%%%%%%%%%%%%%%%%%%%%%%%%%%%%%%%%%%%%
\section{Introduction}\label{Sec:Intr}
%%%%%%%%%%%%%%%%%%%%%%%%%%%%%%%%%%%%%%%%%%%%%%%%%%%%%%%%%%%%%%%%%%%%%%%%%%%%%%%%%%%%%%%%%%%%%%%
%

Being able to handle system uncertainties and, at the same time, failures or degrading efficiency of actuators is an important task in the design of control techniques. There are basically four different research directions in fault tolerant control, see the nice literature survey in~\cite{TaoChen04}. These are (a)~multiple-model, switching, and tuning, (b)~direct and indirect adaptive designs, (c)~fault detection and diagnosis, and (d)~robust control design. We also refer to the exhaustive review paper~\cite{ZhanJian08} and the recent surveys~\cite{GaoCeca15a, GaoCeca15b} for more references.

The uncertainties and actuator faults appearing in the system are usually unknown both in their nature and extent. In this framework, an adaptive control approach seems a suitable choice. The fault tolerant funnel controller that we introduce in the present paper is such a direct adaptive design. The area of adaptive design methods is quite active, see the recent articles~\cite{DengYang16,XieYang16a,ZhanYang17b}. Different approaches have been pursued, such as filter design and backstepping~\cite{DengYang16}, strategies based on solving optimal control problems~\cite{LiYang12,XieYang16a,YangYang00} and (model-free) adaptive control techniques~\cite{TaoChen04,TaoJosh01,ZhanYang17b}.

In the present paper we consider adaptive fault tolerant tracking control for uncertain linear systems with prescribed performance of the tracking error. The uncertainties incorporate modelling errors and process faults as well as bounded noises and disturbances. The actuator faults encompass possible failures and degrading efficiency of the actuators as well as actuator stuck, locked actuator faults, actuator bias and actuator saturation. In the literature, some types of faults are often excluded; in~\cite{ZhanYang17b} no total faults are allowed, in~\cite{TaoChen04, TaoJosh01} only actuator stuck is considered, and actuator saturation is considered in none of the aforementioned works.

Most results in fault tolerant control are model-based, cf.~\cite{GaoCeca15a, XieYang16a}. The approach presented in~\cite{ZhanYang17b} is completely model-free, however only single-input, single-output systems with trivial internal dynamics are considered and total faults are excluded. The approaches in~\cite{TaoChen04, TaoJosh01} require only little knowledge about the system parameters.

As the first result in fault tolerant tracking control that the author is aware of, the design in~\cite{ZhanYang17b} is able to achieve prescribed performance of the tracking error and it is based on the approach of \emph{Prescribed Performance Control} developed in~\cite{BechRovi08}, see also~\cite{BechRovi14}. However, in the present paper we follow the complementary approach of \emph{Funnel Control} which was developed in~\cite{IlchRyan02b}, see also the survey~\cite{IlchRyan08} and the references therein. The funnel controller is an adaptive controller of high-gain type and thus inherently robust, which makes it a suitable choice for fault tolerant control tasks. The funnel controller has been successfully applied e.g.\ in temperature control of chemical reactor models~\cite{IlchTren04}, control of industrial servo-systems~\cite{Hack17} and underactuated multibody systems~\cite{BergOtto19}, DC-link power flow control~\cite{SenfPaug14}, voltage and current control of electrical circuits~\cite{BergReis14a}, control of peak inspiratory pressure~\cite{PompWeye15} and adaptive cruise control~\cite{BergRaue18}.

Since it is usually not possible to foresee which actuator may fail during the operation of a system, a certain redundancy of the actuators is required, so that the remaining actuators are able to compensate for the (total) fault of others. Therefore, a larger number of actuators than sensors is required, which leads to systems with more inputs than outputs and thus additionally complicates the control task. For instance, funnel control has only been investigated for systems with the same number of inputs and outputs, see e.g.~\cite{IlchRyan08,IlchRyan02b}. The funnel control design that we introduce in the present paper extends the recently developed funnel controller for systems with arbitrary relative degree~\cite{BergLe18a}.

We provide extensions of the above mentioned results for uncertain linear systems in the following regard:
\begin{itemize}
  \item the allowed uncertainties and actuator faults encompass essentially all relevant cases,
  \item the control design is model-free,
  \item more inputs than outputs are allowed, which in particular extends available results in funnel control,
  \item the relative degree of the system may be arbitrary, but known, and the zero dynamics may be nontrivial,
  \item prescribed performance of the tracking error is achieved,
  \item the controller is simple in its design and of low complexity.
\end{itemize}

%In order to handle the problem of more inputs than outputs together with possible actuator faults we develop an extension of the time-varying Byrnes-Isidori form from~\cite{IlchMuel07}. We derive a characterization for the existence of a class of rectangular input transformations such that in the resulting system the zero dynamics are independent of the new input.%; in the case where the number of linearly independent actuators without total fault equals the number of outputs, the latter is already true for the original input.

Throughout this article, we use the following notation: We write $\R_{\ge 0}=[0,\infty)$ and $\C_-$, $(\C_+)$ denotes the set of complex numbers with negative (positive) real part. $ {\Gl}_n(\R)$ denotes the group of invertible matrices in $\R^{n\times n}$, $\sigma(A)$ the spectrum of $A\in\R^{n\times n}$, and $M^\dagger$ and $\rk M$ the Moore-Penrose pseudoinverse and rank of $M\in\R^{n\times m}$, resp. By $\mathcal{L}^\infty(I\!\to\!\R^n)$ we denote the set of essentially bounded functions $f:I\!\to\!\R^n$ with norm $\|f\|_\infty = {\rm ess\ sup}_{t\in I} \|f(t)\|$. The set $\mathcal{L}_{\loc}^\infty(I\!\to\!\R^n)$ contains all locally essentially bounded functions and $\mathcal{W}^{k,\infty}(I\!\to\!\R^n)$ is the set of $k$-times weakly differentiable functions $f:I\!\to\!\R^n$ such that $f,\ldots, f^{(k)}\in \mathcal{L}^\infty(I\!\to\!\R^n)$. By $\mathcal{C}^k(I\!\to\!\R^n)$ we denote the set of $k$-times continuously differentiable functions, where $k\in\N_0\cup\{\infty\}$, and we use $\mathcal{C}(I\!\to\!\R^n) = \mathcal{C}^0(I\!\to\!\R^n)$. Finally, $\left.f\right|_{J}$ denotes the restriction of the function $f:I\!\to\!\R^n$ to $J\subseteq I$.

\vspace*{-3mm}

%%%%%%%%%%%%%%%%%%%%%%%%%%%%%%%%%%%%%%%%%%%%%%%%%%%%%%%%%%%%%%%%%%%%%%%%%%%%%%%%%%%%%%%%%%%%%%%%%%%%%%%%%%%%%
\subsection{System class}\label{Ssec:SysClass}
%%%%%%%%%%%%%%%%%%%%%%%%%%%%%%%%%%%%%%%%%%%%%%%%%%%%%%%%%%%%%%%%%%%%%%%%%%%%%%%%%%%%%%%%%%%%%%%%%%%%%%%%%%%%%

In the present paper we consider linear systems with time-varying and nonlinear uncertainties and possible actuator faults of the form
\begin{equation}\label{eq:ABC}
\begin{aligned}
  \dot x(t) &= Ax(t) + BL(t) u(t) + f\big(t,x(t),u(t)\big),\quad x(0)=x^0\\
  y(t) &= Cx(t)
\end{aligned}
\end{equation}
where $x^0\in\R^n$, $A\in\R^{n\times n}$, $B\in\R^{n\times m}$, $C\in\R^{p\times n}$ with $m\geq p$, $f\in \mathcal{C}(\R\times \R^n\times\R^m\rightarrow \R^n)$ is bounded and the following properties are satisfied:
\begin{itemize}
\item[(P1)] $L\in\cC^\infty(\R\to\R^{m\times m})$ such that $L, \dot L,\ldots, L^{(n)}$ are bounded and there exists $q\in\N$ such that $\rk BL(t) = q \ge p$ for all $t\in\R$;
\item[(P2)] the system has (strict) relative degree $r\in\N$, i.e.,
    \begin{itemize}[$\bullet$,leftmargin=0.5cm]
    \item $CA^k B L(\cdot) = 0$ and $CA^kf(\cdot)= 0$ for all $k=0,\ldots,r-2$ and
    \item the ``high-frequency gain matrix'' $\Gamma:=CA^{r-1}B\in\R^{p\times m}$ and~$L$ satisfy $\rk \Gamma L(t)= p$ for all $t\in\R$.
    \end{itemize}
\end{itemize}
The functions $u:\R_{\ge 0}\rightarrow\R^m$ and $y:\R_{\ge 0}\rightarrow\R^p$ are called {\it input} and {\it output} of the system~\eqref{eq:ABC}, resp. Some comments on the system class~\eqref{eq:ABC} are warranted.
\begin{enumerate}
\item The control objective is fault tolerant control (see Subsection~\ref{Ssec:ContrObj}), hence a certain redundancy of the actuators is necessary in~\eqref{eq:ABC}, i.e.,~$m$ is usually much larger than~$p$.  The (unknown) matrix-valued function~$L$ from~(P1) describes the \emph{reliability} of the actuators. Typically we have $L(t) = \diag( l_1(t),\ldots,l_m(t))$ with $l_i\in\cC^\infty(\R\to [0,1])$ monotonically non-increasing and $l_i(0)=1$; in this way, possible failures and degrading efficiency of the actuators may be described, cf.\ also~\cite{LiYang12,XieYang16a, YangYang00}. In our framework, we allow for a general smooth matrix-valued function such that $\rk BL(t) = q$ for all $t\in\R$; one may think of~$q$ groups of actuators, where actuators in the same group perform the same control task, and it is assumed that in each group at least one actuator remains (partially) functional. If $q=p$, then this situation is close to the concept of \emph{uniform actuator redundancy}, see~\cite{ZhaoJian98}. Clearly, $\rk BL(t) \geq p$ is necessary for the application of adaptive control techniques.
\item The (unknown) nonlinearity~$f$ describes possible modelling errors or process faults, uncertainties, bounded noises and disturbances, and types of actuator failures not covered by the matrix function~$L$, see e.g.~\cite{Gert88}. The latter means for instance locked actuator faults, actuator bias or actuator saturation, i.e., $f(u_i) = \sat(u_i) + b_i$ with $i\in\{1,\ldots,m\}$, $b_i\in\R$ and $\sat(u_i) = \sgn(u_i)\, \hat u_i$ for $|u_i|\ge \hat u_i$ and $\sat(u_i) = u_i$ for $|u_i| < \hat u_i$, cf.~\cite{DeLuMatt03}.
\item The conditions in~(P2) are slightly stronger than the assumption of a \emph{strict and uniform relative degree $r\in\N$} as introduced for time-varying nonlinear systems with $m=p$ in~\cite[Def.~2.2]{IlchMuel07}. We use the stronger concept, and call it (strict) relative degree again, since, in view of~(iii), we do not want to impose any differentiability assumptions on the nonlinearity~$f$ which are required in~\cite{IlchMuel07}. For the linear part of~\eqref{eq:ABC}, i.e., $f(\cdot)=0$ and $L(\cdot)=I_m$, the notion of strict relative degree as in (P2) is justified (note that $m>p$ is possible) since by~\cite[Def.~B.1]{Berg16b} the transfer function $G(s) = C(sI-A)^{-1} B$ has vector relative degree $(r,\ldots,r)$, cf.\ also~\cite{Isid95, Muel09a}.
\item We assume that the system parameters $A$, $B$, $C$, $L(\cdot)$, $f(\cdot)$, $x^0$ are unknown; in particular the state space dimension~$n$ does not need to be known. We only require knowledge of the relative degree~$r\in\N$. Furthermore, we will derive a class of rectangular input transformations of the form $u(t) = K(t) v(t)$, where $K\in\cC^\infty(\R\to\R^{m\times p})$, such that in the resulting system the zero dynamics are independent of the new input~$v$. As a structural assumption, we will require that the zero dynamics of the time-varying linear system $(A,BL(\cdot)K(\cdot),C)$ are uniformly exponentially stable for one (and hence any)~$K$ in this class; it is hence independent of the choice of~$K$. Some additional knowledge of system parameters, such as the high-frequency gain matrix $\Gamma = C A^{r-1} B$ from~(P2), may be helpful for the construction of~$K$, while it is not required; see Subsection~\ref{Ssec:ConGain}.
\end{enumerate}

%The roles of the reliability matrix function~$L$ and the uncertainties~$f$ are illustrated in Figure~\ref{Fig:Sys}.
We stress that even in the case $L(\cdot) = I_m$ the results of the present paper are new when $m>p$, since $m=p$ is usually assumed in funnel control.

\vspace*{-3mm}

%%%%%%%%%%%%%%%%%%%%%%%%%%%%%%%%%%%%%%%%%%%%%%%%%%%%%%%%%%%%%%%%%%%%%%%%%%%%%%%%%%%%%%%%%%%%%%%%%%%%%%%%%%%%%%
\subsection{Control objective}\label{Ssec:ContrObj}
%%%%%%%%%%%%%%%%%%%%%%%%%%%%%%%%%%%%%%%%%%%%%%%%%%%%%%%%%%%%%%%%%%%%%%%%%%%%%%%%%%%%%%%%%%%%%%%%%%%%%%%%%%%%%

The objective is fault tolerant tracking of a reference trajectory $y_{\rm ref}\in\mathcal{W}^{r,\infty}(\R_{\ge 0}\to\R^p)$ with prescribed performance, i.e., we seek an output error derivative feedback such that in the closed-loop system the tracking error $e(t) = y(t)-y_{\rm ref}(t)$ evolves within a prescribed performance funnel
\begin{equation}
\mathcal{F}_{\varphi} := \setdef{(t,e)\in\R_{\ge 0} \times\R^p}{\varphi(t) \|e\| < 1},\label{eq:perf_funnel}
\end{equation}
which is determined by a function~$\varphi$ belonging to
\begin{equation*}
\Phi_r \!:=\!
\left\{
\varphi\in  \cC^r(\R_{\ge 0}\to\R)
\left|\!\!\!
\begin{array}{l}
\text{ $\varphi, \dot \varphi,\ldots,\varphi^{(r)}$ are bounded,}\\
\text{ $\varphi (\tau)>0$ for all $\tau>0$,}\\
 \text{ and }  \liminf_{\tau\rightarrow \infty} \varphi(\tau) > 0
\end{array}
\right.\!\!\!
\right\}.
\label{eq:Phir}\end{equation*}
Furthermore, the state~$x$ and the input~$u$ in~\eqref{eq:ABC} should remain bounded.

The funnel boundary is given by the reciprocal of~$\varphi$, see Fig.~\ref{Fig:funnel}. If $\varphi(0)=0$, then no restriction is put on the initial error $e(0)$. Furthermore, each performance funnel $\mathcal{F}_{\varphi}$ with $\varphi\in\Phi_r$ is bounded

\vspace{1mm}
\begin{minipage}{.25\textwidth}
\hspace*{-8mm}\includegraphics[width=6.4cm, trim=170 550 230 120, clip]{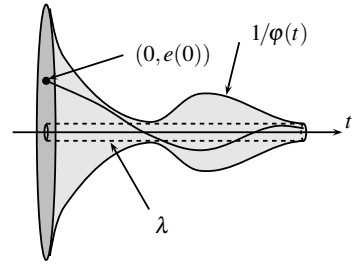}
%}
\captionof{figure}{Error evolution in a funnel $\mathcal F_{\varphi}$  with boundary $\varphi(\cdot)^{-1}$.}
\label{Fig:funnel}
\end{minipage}
\hspace{1mm}
\begin{minipage}{.2\textwidth}
 away from zero, since boundedness of $\varphi$ gives $1/\varphi(t)\geq\lambda$ for all $t > 0$ and some $\lambda>0$. While it is often convenient to choose a monotonically decreasing funnel boundary, it might be advantageous to widen the funnel over some later time interval, for instance in the presence of periodic disturbances or strongly varying reference signals.
\end{minipage}

%While it is often convenient to choose a monotonically decreasing funnel boundary, it might be advantageous to widen the funnel over some later time interval, for instance in the presence of periodic disturbances or strongly varying reference signals. %For typical choices of funnel boundaries see e.g.~\cite[Sec.~3.2]{Ilch13}.

%\vspace*{-3mm}

%%%%%%%%%%%%%%%%%%%%%%%%%%%%%%%%%%%%%%%%%%%%%%%%%%%%%%%%%%%%%%%%%%%%%%%%%%%%%%%%%%%%%%%%%%%%%%%%%%%%%%%%%%%%%
\subsection{Organization of the present paper}\label{Ssec:Contrib}
%%%%%%%%%%%%%%%%%%%%%%%%%%%%%%%%%%%%%%%%%%%%%%%%%%%%%%%%%%%%%%%%%%%%%%%%%%%%%%%%%%%%%%%%%%%%%%%%%%%%%%%%%%%%%

The paper is structured as follows. In Section~\ref{Sec:NF} we derive a normal form for system~\eqref{eq:ABC} which extends the Byrnes-Isidori form for time-varying linear systems from~\cite{IlchMuel07}. We derive a class of rectangular input transformations such that in the resulting system the zero dynamics are independent of the new input and uniformly exponentially stable. The rectangular input transformation is exploited as controller weight matrix in the design of a fault tolerant funnel controller in Section~\ref{Sec:FTC} and possible choices are discussed. The performance of the proposed funnel controller is illustrated by means of a linearized model for the lateral motion of a Boeing~737 aircraft in Section~\ref{Sec:Sim}. %Some conclusions are given in Section~\ref{Sec:Concl}.

\vspace*{-3mm}

%
%%%%%%%%%%%%%%%%%%%%%%%%%%%%%%%%%%%%%%%%%%%%%%%%%%%%%%%%%%%%%%%%%%%%%%%%%%%%%%%%%%%%%%%%%%%%%%%
\section{A time-varying normal form}\label{Sec:NF}
%%%%%%%%%%%%%%%%%%%%%%%%%%%%%%%%%%%%%%%%%%%%%%%%%%%%%%%%%%%%%%%%%%%%%%%%%%%%%%%%%%%%%%%%%%%%%%%
%

%decomposition ???

We derive a normal form for systems~\eqref{eq:ABC} which is an extension of the Byrnes-Isidori form for time-varying linear systems from~\cite{IlchMuel07}. In this paper, with ``normal form'' we do not mean a ``canonical form'' which would be a unique representative of its equivalence class with respect to a certain set of transformations (or the mapping to this representative, resp.), but rather a weaker notion. We will see that the freedom left within the non-zero entries of the derived decomposition of~\eqref{eq:ABC} can be specified and is not significant which justifies to call it ``normal form''.

We introduce the following matrix-valued functions:
\begin{align*}
    \cB(t) &:= \left[BL(t),\left(\ddt\!-\!A\right)\!\big(BL(t)\big), \ldots, \left(\ddt\!-\!A\right)^{r-1}\!\!\big(BL(t)\big)\right]\!\in\!\R^{n\times rm},\\
    \cC &:= \left[ C^\top, (CA)^\top, \ldots, \big(CA^{r-1}\big)^\top \right]^\top \in\R^{rp \times n},\quad t\in\R.
\end{align*}
Let $\rho:= \rk \cC$, choose $V\in\R^{n\times (n-\rho)}$ such that $\im V = \ker \cC$ and define
\[
    \cN(t) := V^\dagger \left[ I_n - \cB(t) \big(\cC \cB(t)\big)^\dagger \cC\right]\in\R^{(n-\rho)\times n},\quad t\in\R,
\]
as well as
\begin{equation}\label{eq:U(t)}
    U(t) := \begin{bmatrix} \cC \\ \cN(t)\end{bmatrix} \in\R^{(n-\rho+pr)\times n},\quad t\in\R.
\end{equation}

\begin{Lem}\label{Lem:invU}
Consider a system~\eqref{eq:ABC} with (P1) and (P2). Then we have for all $t\in\R$ that $\rho = \rk \cC = \rk \cC \cB(t) = pr$ and
\begin{equation}\label{eq:CB}
    \cC\cB(t) = \begin{smallbmatrix} 0 & & (-1)^{r-1} \Gamma L(t)\\  & \reflectbox{$\ddots$} & \\ \Gamma L(t) & & \ast \end{smallbmatrix}.
\end{equation}
Furthermore, $U(\cdot)$ as in~\eqref{eq:U(t)} is invertible with
\begin{equation}\label{eq:U(t)inv}
    U(t)^{-1} = \left[ \cB(t) \big(\cC \cB(t)\big)^\dagger, V\right],\quad t\in\R.
\end{equation}
\end{Lem}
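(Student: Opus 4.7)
The plan is to reduce both rank claims and the inverse formula to a careful analysis of the block structure of $\cC\cB(t)$. Throughout, I would repeatedly use the elementary observation that, since~(P2) gives $CA^k BL(t) \equiv 0$ for $k=0,\dots,r-2$, differentiation in $t$ yields $CA^k (BL)^{(s)}(t) \equiv 0$ for those same $k$ and every $s\ge 0$.

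First I would expand each column of $\cB(t)$ via the binomial formula
\[
  \Big(\ddt - A\Big)^{j-1}\!\big(BL(t)\big) \;=\; \sum_{k=0}^{j-1} \binom{j-1}{k}(-A)^{j-1-k}(BL)^{(k)}(t),
\]
which is valid because $A$ and $\ddt$ commute, and then read off the $(i,j)$-block of $\cC\cB(t)$ as $CA^{i-1}(\ddt-A)^{j-1}(BL)$ for $i,j\in\{1,\dots,r\}$. Each summand is a nonzero scalar multiple of $CA^{i+j-k-2}(BL)^{(k)}$, which the observation annihilates as soon as $i+j-k\le r$. Consequently the block vanishes whenever $i+j\le r$, accounting for the entire zero region strictly above the anti-diagonal; on the anti-diagonal itself $i+j=r+1$ only the term $k=0$ can survive, and it evaluates to $(-1)^{j-1}\Gamma L(t)$. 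This yields~\eqref{eq:CB}.

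Next I would extract the rank claims by reversing the order of the block rows of $\cC\cB(t)$: the result is block triangular with diagonal blocks $\pm\Gamma L(t)$, each of rank $p$ by~(P2), so $\rk \cC\cB(t) = rp$. The chain $rp = \rk\cC\cB(t) \le \rk\cC \le rp$, where the last inequality is trivial since $\cC$ has only $rp$ rows, then forces $\rho = \rk\cC = rp$. In particular $U(t)$ becomes square of size $n\times n$, making the distinction between left and right inverse irrelevant.

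Finally I would verify~\eqref{eq:U(t)inv} by direct block multiplication of $U(t)$ against the proposed right inverse. The $(1,1)$-block is $\cC\cB(t)(\cC\cB(t))^\dagger = I_{pr}$ since $\cC\cB(t)$ has full row rank; the $(1,2)$-block $\cC V$ vanishes by $\im V = \ker\cC$; the $(2,2)$-block $V^\dagger V$ equals $I_{n-\rho}$ since $V$ has full column rank; and the $(2,1)$-block collapses to zero after a single application of the Moore--Penrose identity $M^\dagger M M^\dagger = M^\dagger$ to $M = \cC\cB(t)$. The main delicacy sits in the first step, where one must track the sign $(-1)^{j-1}$ and the precise ranges of $i,j,k$ to isolate the anti-diagonal term and rule out any contribution strictly above it; everything else is an essentially mechanical assembly.
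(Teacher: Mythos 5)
Your proposal is correct and takes essentially the same route as the paper: derive the anti-triangular structure \eqref{eq:CB}, read off $\rk \cC\cB(t)=pr$ from the full-row-rank anti-diagonal blocks $\pm\Gamma L(t)$, conclude $\rho=pr$, and verify \eqref{eq:U(t)inv} by exactly the block multiplication you describe (where your Moore--Penrose step for the $(2,1)$-block and the paper's use of $\cC\cB(t)\big(\cC\cB(t)\big)^\dagger=I_{pr}$ are interchangeable, and the $(2,2)$-block reduces to $V^\dagger V$ because $\cC V=0$). The only cosmetic differences are that the paper outsources \eqref{eq:CB} to the proof of Prop.~3.1 of Ilchmann--Mueller---which is precisely the binomial computation you spell out---and reaches $\rho\ge pr$ via $n\ge\rk U(t)=n-\rho+pr$ rather than via the more direct $\rk\cC\ge\rk\cC\cB(t)$.
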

\begin{proof} Similar to the proof of~\cite[Prop.~3.1]{IlchMuel07} and using~(P2) it is straightforward to show that~\eqref{eq:CB} holds. Hence $\rk \cC \cB(t) = pr$ for all $t\in\R$ since $\rk \Gamma L(t) = p$. Therefore, $\cC \cB(t)  \big(\cC \cB(t)\big)^\dagger = I_{pr}$, which implies
$\begin{smallbmatrix} \cC \\ \cN(t)\end{smallbmatrix} \left[ \cB(t) \big(\cC \cB(t)\big)^\dagger, V\right] = \begin{smallbmatrix} I_{pr} & 0 \\ 0 & I_{n-\rho}\end{smallbmatrix}$, thus $n \ge \rk U(t) = \rk \begin{smallbmatrix} \cC \\ \cN(t)\end{smallbmatrix} = n-\rho+pr$ for all $t\in\R$. As a consequence, $pr \le \rho = \rk \cC \le pr$ and this shows $\rho = \rk \cC = pr$ and~\eqref{eq:U(t)inv}.
\end{proof}

$U$ as in~\eqref{eq:U(t)} will serve as a time-varying state space transformation in the following. Therefore, it will be important that~$U$ is a Lyapunov transformation.

\begin{Def}
  We call $M\in\cC^1(\R\to\Gl_n(\R))$ a \emph{Lyapunov transformation}, if $M$, $M^{-1}$ and $\dot M$ are bounded.
\end{Def}

By~\eqref{eq:U(t)} and~\eqref{eq:U(t)inv} it is straightforward to see that~$U$ is a Lyapunov transformation if, and only if, $\cB \big(\cC\cB\big)^\dagger$  and $\ddt \left(\cB \big(\cC\cB\big)^\dagger\right)$ are bounded. A simpler condition, which however is only sufficient, is given in the next result.

\begin{Lem}\label{Lem:boundU}
Consider a system~\eqref{eq:ABC} with (P1) and (P2). Then~$U$ as in~\eqref{eq:U(t)} is a Lyapunov transformation, if
\begin{equation}\label{eq:cond-boundU}
    \exists\,\alpha>0\ \forall\, t\in\R:\ \det \Big(\cC \cB(t) \big(\cC \cB(t)\big)^\top\Big) \geq \alpha.
\end{equation}
The converse implication is false in general.
\end{Lem}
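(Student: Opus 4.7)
The plan is to reduce the Lyapunov transformation conditions for $U$ to boundedness statements about the single expression $\cB(\cC\cB)^\dagger$ and its time derivative, then to extract these from the determinant hypothesis via the explicit pseudoinverse formula.

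First I would observe that since $\cC$ is constant, boundedness of $U$ and $\dot U$ is equivalent to boundedness of $\cN$ and $\dot\cN$, and by~\eqref{eq:U(t)inv} boundedness of $U^{-1}$ is equivalent to boundedness of $\cB(\cC\cB)^\dagger$ (as $V$ is constant). Together with the definition of $\cN$, everything therefore reduces to showing that both $\cB(\cC\cB)^\dagger$ and $\ddt\!\bigl[\cB(\cC\cB)^\dagger\bigr]$ are bounded.

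Next I would verify that $\cB$ and $\dot\cB$ are bounded. Expanding $(\tfrac{\mathrm d}{\mathrm d t}-A)^k(BL(t))$ by the binomial theorem shows that the entries of $\cB(t)$ are linear combinations of $A^iBL^{(j)}(t)$ with $j\le r-1$, while $\dot\cB(t)$ involves at most $L^{(r)}$. Since $\rho=pr\le n$ by Lemma~\ref{Lem:invU}, we have $r\le n$, so (P1) guarantees all involved derivatives of $L$ are bounded. Consequently $\cC\cB$ and $\tfrac{\mathrm d}{\mathrm d t}(\cC\cB)=\cC\dot\cB$ are bounded as well.

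By Lemma~\ref{Lem:invU} the matrix $\cC\cB(t)$ has full row rank $pr$, so $(\cC\cB)^\dagger=(\cC\cB)^\top\!\bigl[(\cC\cB)(\cC\cB)^\top\bigr]^{-1}$. The Gram matrix $G(t):=(\cC\cB(t))(\cC\cB(t))^\top$ is symmetric positive definite and bounded above (from the previous step), so its eigenvalues are bounded above by some constant~$C$; the hypothesis $\det G(t)\ge\alpha$ then forces the smallest eigenvalue to satisfy $\lambda_{\min}(G(t))\ge\alpha/C^{pr-1}$, whence $G(t)^{-1}$ is bounded. Combining boundedness of $(\cC\cB)^\top$ and $G^{-1}$ gives boundedness of $(\cC\cB)^\dagger$, and multiplying by the bounded $\cB$ yields boundedness of $\cB(\cC\cB)^\dagger$ and of $\cN$. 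For the derivative I would use the product rule together with $\ddt G^{-1}=-G^{-1}\dot G G^{-1}$, where $\dot G=\dot{(\cC\cB)}(\cC\cB)^\top+(\cC\cB)\dot{(\cC\cB)}^\top$ is bounded by the preceding step; all remaining factors are already known to be bounded, so $\ddt[\cB(\cC\cB)^\dagger]$ and thus $\dot\cN$ are bounded. This establishes that $U$ is a Lyapunov transformation.

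For the failure of the converse I would construct an explicit small example. Take $n=2$, $p=1$, $m=2$, $r=1$, $A=0$, $B=I_2$, $C=[1,1]$ and $L(t)=\tfrac{1}{1+t^2}I_2$, which satisfies (P1) and (P2). Then $\cC\cB(t)=\tfrac{1}{1+t^2}[1,1]$, so $\det((\cC\cB)(\cC\cB)^\top)=\tfrac{2}{(1+t^2)^2}\to0$, violating~\eqref{eq:cond-boundU}. A direct calculation, however, gives $\cB(t)(\cC\cB(t))^\dagger\equiv(\tfrac12,\tfrac12)^\top$, whose derivative vanishes, so $\cN$ and $\dot\cN$ remain bounded and $U$ is still a Lyapunov transformation. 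I expect the only subtlety to be the bookkeeping in the derivative step; the construction of the counterexample is the simplest part.
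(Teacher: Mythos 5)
Your proof is correct and takes essentially the same route as the paper: boundedness of $\cB$ and $\dot\cB$ from (P1) combined with the determinant lower bound to control $(\cC\cB)^\dagger$ and its derivative (the paper invokes the adjugate formula where you use the Gram-matrix eigenvalue bound, and its counterexample for the converse is the scalar version $A=0$, $B=C=1$, $L(t)=\tfrac{1}{t^2+1}$ of your decaying-$L$ construction). Both the boundedness argument and your counterexample check out.
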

\begin{proof} First we show that a Lyapunov transformation~$U$ does not necessarily satisfy~\eqref{eq:cond-boundU}. To this end, consider $A=0$, $B=C=1$, $f=0$ and $L(t) = \tfrac{1}{t^2+1}$. Then~(P1) and~(P2) are satisfied with $r=1$ and $U=1$ is a Lyapunov transformation, but~\eqref{eq:cond-boundU} is not satisfied.

It remains to show that~\eqref{eq:cond-boundU} implies that~$U$ is a Lyapunov transformation. This can be inferred using that $\cB$ and $\dot \cB$ are bounded by~(P1) and that for any pointwise invertible $M\in\cC^\infty(\R\to\R^{k\times k})$ we have $M(t)^{-1} = \frac{\adj M(t)}{\det M(t)}$,
\[
    \ddt M(t)^{-1} = \frac{\big(\ddt \adj M(t)\big) \det M(t) - \adj M(t) \big(\ddt \det M(t)\big)}{\left(\det M(t)\right)^2}. \qedhere
\]
\end{proof}

Note that, in view of Lemma~\ref{Lem:invU}, $\det \Big(\cC \cB(\cdot) \big(\cC \cB(\cdot)\big)^\top\Big)$ is a positive and smooth function and condition~\eqref{eq:cond-boundU} only requires that, roughly speaking, it does not decay to zero for $t\to\infty$ (or $t\to -\infty$, if this is of relevance).

A crucial tool for the proof of the main result of this section is the following relation.

\begin{Lem}\label{Lem:Bdag=Wdag}
Consider a system~\eqref{eq:ABC} with (P1) and (P2) such that $q=p$ in~(P1). Then
\begin{equation}\label{eq:B=BCBdagCB}
\begin{aligned}
    \forall\, t\in\R:\quad % & \cB(t) - \cB(t) \big(\cC \cB(t)\big)^\dagger \cC \cB(t) \\
    % & =
    \Big( I_n - \cB(t) \big(\cC \cB(t)\big)^\dagger \cC\Big) \cB(t) = 0
\end{aligned}
\end{equation}
and, as a consequence, $\cN(t) \cB(t) = 0$ for all $t\in\R$.
\end{Lem}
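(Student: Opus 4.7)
The plan is to reduce the matrix identity to a statement about the rank of $\cB(t)$, and then use the hypothesis $q=p$ through a local smooth rank factorization of $BL$.

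First I would rewrite \eqref{eq:B=BCBdagCB} in the equivalent form $\cB(t)(\cC\cB(t))^\dagger\cC\cB(t) = \cB(t)$. Since $\cC\cB(t)$ has full row rank $pr$ by Lemma~\ref{Lem:invU}, the matrix $(\cC\cB(t))^\dagger\cC\cB(t)$ is the orthogonal projection onto the row space of $\cC\cB(t)$, so the identity is equivalent to the statement that every row of $\cB(t)$ lies in the row space of $\cC\cB(t)$. The inclusion $\text{row}(\cC\cB(t))\subseteq\text{row}(\cB(t))$ is automatic, and these spaces have dimensions $pr$ and $\rk\cB(t)$ respectively, so the claim reduces to $\rk\cB(t)=pr$. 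The lower bound $\rk\cB(t)\geq pr$ is immediate from $\rk(\cC\cB(t))=pr$.

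The real work is therefore the matching upper bound $\rk\cB(t)\leq pr$, which is where the hypothesis $q=p$ enters essentially. By (P1) with $q=p$, the smooth matrix-valued function $BL$ has constant rank $p$, so around any fixed $t_0\in\R$ it admits a smooth local rank factorization
\[
    BL(t) = N(t)\tilde M(t),\qquad N(t)\in\R^{n\times p},\ \tilde M(t)\in\R^{p\times m},
\]
with $N$ of full column rank $p$ and $\tilde M$ of full row rank $p$ (concretely, let $N$ consist of $p$ columns of $BL$ that are independent at $t_0$, and recover $\tilde M$ via the pseudoinverse of $N$). Setting $D:=\tfrac{d}{dt}-A$, the identity $D(f_1 f_2)=(Df_1)f_2+f_1\dot f_2$ and the fact that $D$ commutes with $\tfrac{d}{dt}$ yield, by a standard induction, the Leibniz-type formula
\[
    D^k(N\tilde M) = \sum_{j=0}^k \binom{k}{j}\,(D^{k-j}N)\,\tilde M^{(j)},\qquad k=0,\ldots,r-1.
\]
Writing $F_i:=D^iN\in\R^{n\times p}$, each block of $\cB(t_0)$ then satisfies $\im D^k(BL)(t_0)\subseteq \im[F_0(t_0),\ldots,F_k(t_0)]$, and summing over $k=0,\ldots,r-1$ gives
\[
    \im\cB(t_0) \subseteq \im[F_0(t_0),F_1(t_0),\ldots,F_{r-1}(t_0)],
\]
whose dimension is at most $pr$. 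Thus $\rk\cB(t_0)\leq pr$, and since $t_0$ was arbitrary, \eqref{eq:B=BCBdagCB} is proved. The consequence $\cN(t)\cB(t)=0$ is then immediate from the definition of $\cN(t)$.

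I expect the main obstacle to be the upper bound $\rk\cB(t)\leq pr$: without constant rank of $BL$, the blocks $(\tfrac{d}{dt}-A)^k BL$ need not stay confined to an $rp$-dimensional subspace of $\R^n$, and the assumption $q=p$ is precisely what channels all of them through the single $p$-column factor $N$ and its $D$-iterates via the Leibniz formula. A minor technical point to be addressed carefully is the existence of the smooth local rank factorization of $BL$, but this is a standard result for smooth matrix functions of constant rank.
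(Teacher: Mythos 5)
Your proof is correct, and while its first half parallels the paper's argument, the second half takes a genuinely different and more elementary route. The paper also channels $\cB$ through a rank-$p$ factor of $BL$: it invokes the global smooth factorization $BL(t)[R_1(t),R_2(t)]=[W(t),0]$ from~\cite[Thm.~3.9]{KunkMehr06} and proves $\cB=\cW\cR$ by the same Leibniz-type expansion of $\big(\ddt-A\big)^k$ that you use; your local factorization $BL=N\tilde M$ obtained from $p$ columns independent near $t_0$ does the same job with less machinery, since only the pointwise bound $\rk\cB(t_0)\le pr$ is needed. Where you diverge is in how the pseudoinverse identity is then concluded. The paper determines the exact rank $\rk\cB(t)=pr$ (using $\rk\cW(t)=pr$ via the relative degree assumption and~\cite[Cor.~3.3]{IlchMuel07}, plus Sylvester's rank inequality), then applies~\cite[Thm.~3.9]{KunkMehr06} a second time to factor $\cB=YV_1^\top$ with $\cC Y$ invertible and verifies $\cB\big(\cC\cB\big)^\dagger\cC\cB=\cB$ by explicit computation. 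You instead observe that $\big(\cC\cB(t)\big)^\dagger\cC\cB(t)$ is the orthogonal projector onto the row space of $\cC\cB(t)$, so the identity is equivalent to $\mathrm{row}\,\cB(t)\subseteq\mathrm{row}\,\cC\cB(t)$, which follows from the automatic reverse inclusion together with $\rk\cB(t)\le pr=\rk\cC\cB(t)$ (the lower bound being free from Lemma~\ref{Lem:invU}). This pointwise projection argument dispenses with the second smooth factorization, with Sylvester's inequality, and with the rank computation for $\cW$, so your version is leaner; the paper's route has the side benefit of exhibiting explicit smooth factorizations of $\cB$ that fit its broader framework of smooth constant-rank decompositions. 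Only a cosmetic remark: your factor $N(t)$ clashes notationally with the matrix $N$ appearing in the normal form~\eqref{eq:BIF}, so it should be renamed if the proof were inserted into the paper.
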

\begin{proof} First observe that by property~(P1) and~\cite[Thm.~3.9]{KunkMehr06} there exists $W\in\cC^\infty(\R\to\R^{n\times q})$ with $\rk W(t)=q$ for all $t\in\R$ and pointwise orthogonal $[R_1,R_2]\in\cC^\infty(\R\to\R^{m\times (q + (m-q))})$ such that
\begin{equation}\label{eq:kerBL}
 \forall\, t\in\R:\   BL(t) [R_1(t), R_2(t)] = [W(t), 0].
\end{equation}
Define, for all $t\in\R$,
\begin{align*}
    \cW(t) & := \left[W(t),\left(\ddt - A\right) W(t), \ldots, \left(\ddt - A\right)^{r-1} W(t)\right],\\
    \cR(t) &:= \left( \cR_{i,j} \right)_{i,j=1,\ldots,r},\quad  \cR_{i,j}(t) := \left\{\begin{array}{rl} \tbinom{j-1}{i-1} R_1^{(j-i)}(t)^\top, &\quad j\geq i,\\ 0, &\quad j<i.\end{array}\right.
\end{align*}
\emph{Step 1:} Fix $t\in\R$. We show that $\cW(t)\cR(t) = \cB(t)$ by proving that
\[
    \cW_j(t) \! := \! \cW(t)\cR(t) \begin{smallbmatrix} 0_{(j-1)m\times m}\\ I_m\\ 0_{(r-j)m\times m}\end{smallbmatrix} \! = \! \cB(t) \begin{smallbmatrix} 0_{(j-1)m\times m}\\ I_m\\ 0_{(r-j)m\times m}\end{smallbmatrix} \! =: \! \cB_j(t)
\]
for all $j=1,\ldots,r$. For any $j\in\{1,\ldots,r\}$ we have
\[
    \cW_j(t) = \sum\nolimits_{i=1}^j \big[ (\ddt-A)^{i-1} W(t)\big] \tbinom{j-1}{i-1} R_1^{(j-i)}(t)^\top
\]
and invoking formula~\cite[(3.2)]{IlchMuel07} with $C=I$ and $i=0$ it follows that
\[
    \cW_j(t) = \sum\nolimits_{i=0}^{j-1} \sum\nolimits_{k=0}^i (-1)^{k} \tbinom{i}{k} \tbinom{j-1}{i} A^k W^{(i-k)}(t) R_1^{(j-i-1)}(t)^\top.
\]
Changing the summation over the ``triangle'' in the double sum we obtain
\begin{align*}
   & \cW_j(t) = \sum\nolimits_{k=0}^{j-1} \sum\nolimits_{i=k}^{j-1} (-1)^{k} \tbinom{i}{k} \tbinom{j-1}{i} A^k W^{(i-k)}(t) R_1^{(j-i-1)}(t)^\top\\
    &= \sum\nolimits_{k=0}^{j-1} \sum\nolimits_{l=0}^{j-k-1} (-1)^{k} \tbinom{j-1}{l+k} \tbinom{l+k}{k} A^k W^{(l)}(t) R_1^{(j-l-k-1)}(t)^\top\\
    &= \sum\nolimits_{k=0}^{j-1} (-1)^{k} \tbinom{j-1}{k} \sum\nolimits_{l=0}^{j-k-1} \tbinom{j-k-1}{l} A^k W^{(l)}(t) R_1^{(j-l-k-1)}(t)^\top\\
    &= \sum\nolimits_{k=0}^{j-1} (-1)^{k} \tbinom{j-1}{k} \big(\ddt\big)^{j-k-1} \left[ A^k W(t) R_1(t)^\top\right],
\end{align*}
where we used $\tbinom{j-1}{l+k} \tbinom{l+k}{k} = \tbinom{j-1}{k} \tbinom{j-k-1}{l}$. Again using formula~\cite[(3.2)]{IlchMuel07} we finally obtain
\begin{align*}
    \cW_j(t) &= (\ddt-A)^{j-1} \big[ W(t) R_1(t)^\top\big] = (\ddt-A)^{j-1} \big[ B L(t)\big] = \cB_j(t).
\end{align*}

\emph{Step 2}: We show that $\rk \cB(t) = pr$ for all $t\in\R$. It follows from~(P2) and~\eqref{eq:kerBL} that $CA^{k} W(t) = 0$ for $k=0,\ldots,r-2$ and $\rk CA^{r-1} W(t) = p$ for all $t\in\R$, by which, using $q=p$, $CA^{r-1} W(t) \in\Gl_{p}(\R)$. Therefore,~\cite[Cor.~3.3]{IlchMuel07} implies that $\rk \cW(t) = pr$ for all $t\in\R$. Furthermore, it is clear that $\rk \cR(t) = pr$ for all $t\in\R$ and hence we may infer from Sylvester's rank inequality that $pr  = \rk \cW(t) + \rk \cR(t) - pr \le \rk \cW(t) \cR(t) \le \min\{\rk \cW(t), \rk \cR(t)\} = pr$. This shows $\rk \cB(t) = \rk \cW(t)\cR(t) = pr$ for all $t\in\R$.

\emph{Step 3}: We show the assertion of the lemma. Since~$\cB$ has constant rank we may again apply~\cite[Thm.~3.9]{KunkMehr06} to find $Y\in\cC^\infty(\R\to\R^{n\times pr})$ with $\rk Y(t)=pr$ and pointwise orthogonal $[V_1,V_2]\in\cC^\infty(\R\to\R^{rm\times (pr + (m-p)r)})$ such that $\cB(t) [V_1(t), V_2(t)] = [Y(t), 0]$ for all $t\in\R$. Then using that $\cB(t) = Y(t) V_1(t)^\top$ and that $[V_1, V_2]$ is pointwise orthogonal, by which $V_1(t)^\top V_1(t) = I_{pr}$ for all $t\in\R$, we obtain from a straightforward computation that $\cB(t)  \big(\cC \cB(t)\big)^\dagger = Y(t) \big(\cC Y(t)\big)^\dagger$. Clearly, $\rk \cC Y(t) = \rk \cC\cB(t) [V_1(t), V_2(t)] = \rk \cC\cB(t) = pr$, thus $\cC Y(t) \in{\Gl}_{pr}(\R)$ for all $t\in\R$. Therefore, it finally follows that
\begin{align*}
  \cB(t)  \big(\cC \cB(t)\big)^\dagger \cC \cB(t) &= Y(t) \big(\cC Y(t)\big)^{-1} \cC Y(t) V_1(t)^\top\\
   &= Y(t) V_1(t)^\top = \cB(t).\qedhere
\end{align*}
\end{proof}

We are now in the position to state the main result on the time-varying normal form.

\begin{Thm}\label{Thm:BIF}
Consider a system~\eqref{eq:ABC} with (P1) and (P2) such that~$U$ as in~\eqref{eq:U(t)} is a Lyapunov transformation. Then
\begin{equation}\label{eq:ABC-trafo}
    (\hat A, \hat B, \hat C) := \big( (UA+\dot U)U^{-1}, UBL, CU^{-1}\big)
\end{equation}
and $\hat f(t,z,u) := U(t) f\big(t,U(t)^{-1}z,u\big)$, $(t,z,u)\in\R^{1+n+m}$, satisfy
\begin{equation}\label{eq:BIF}
\begin{aligned}
    \hat A(t) &= \begin{smallbmatrix} 0 & I_p & 0 & \cdots & 0 & 0\\ 0 & 0& I_p & & & 0\\ \vdots & \vdots & \ddots & \ddots & & \vdots\\ 0 & 0 & \cdots & 0 & I_p & 0\\ R_1(t) & R_2(t) & \cdots & R_{r-1}(t) & R_r(t) & S(t)\\ P_1(t) & P_2(t) & \cdots & P_{r-1}(t) & P_r(t) & Q(t)\end{smallbmatrix},\ \
     \hat B(t) = \begin{smallbmatrix} 0\\ 0 \\ \vdots\\ 0\\ \Gamma L(t)\\ N(t)\end{smallbmatrix},\\
     \hat C &= \begin{bmatrix} I_p & 0 & \cdots  &0\end{bmatrix},\ \ \hat f(t,z,u) = \big( 0, \ldots, 0, f_r(t,z,u), f_\eta(t,z,u)\big)^\top
\end{aligned}
\end{equation}
where $R_i\in\cC^\infty(\R\to\R^{p\times p})$, $P_i, S^\top\in\cC^\infty(\R\to\R^{(n-pr)\times p})$, $Q\in\cC^\infty(\R\to\R^{(n-pr)\times (n-pr)})$, $N\in\cC^\infty(\R\to\R^{(n-pr)\times m})$, $f_r\in\cC(\R\times\R^n\times\R^m\to\R^p)$, $f_\eta\in\cC(\R\times\R^n\times\R^m\to\R^{n-pr})$ are all bounded. Furthermore, the following holds true:
\begin{enumerate}
  \item There exists $K\in\cC^\infty(\R\to\R^{m\times p})$ such that $\Gamma L(t) K(t) = I_p$ and $N(t) K(t) = 0$ for all $t\in\R$ if, and only if,
  \begin{equation}\label{eq:cond-K-Gamma}
    \forall\, t\in\R:\ \im \cB(t) \big(\cC\cB(t)\big)^\dagger \begin{smallbmatrix} 0 \\ I_p\end{smallbmatrix} \subseteq \im BL(t);
  \end{equation}
  in this case we may choose
  \begin{equation}\label{eq:K(t)=}
    K(t) := \big(BL(t)\big)^\dagger \cB(t) \big(\cC\cB(t)\big)^\dagger \begin{smallbmatrix} 0 \\ I_p\end{smallbmatrix}.
  \end{equation}
  \item If $q=p$ for~$q$ in (P1), then $P_2 = P_3 = \ldots = P_r = 0$ and $N=0$.
\end{enumerate}
\end{Thm}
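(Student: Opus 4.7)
My plan is to treat the derivation of the main normal form (the shapes of $\hat A$, $\hat B$, $\hat C$ and $\hat f$ in~\eqref{eq:BIF}) as the routine computational core, and to concentrate the effort on the two structural statements~(i) and~(ii). Concretely, setting $z_i := CA^{i-1}x$ for $i=1,\ldots,r$ and $\eta := \cN x$, property~(P2) immediately yields $\dot z_i = z_{i+1}$ for $i<r$ (since $CA^{i-1}BL$ and $CA^{i-1}f$ vanish), while $\dot z_r = CA^r x + \Gamma L(t)\,u + CA^{r-1}f$. Substituting $x = \cB(\cC\cB)^\dagger z + V\eta$ from the formula $U^{-1} = [\cB(\cC\cB)^\dagger, V]$ in Lemma~\ref{Lem:invU} reads off $[R_1,\ldots,R_r] = CA^r\cB(\cC\cB)^\dagger$ and $S = CA^r V$. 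The same substitution in $\dot\eta = (\dot\cN + \cN A)x + \cN BL\,u + \cN f$ identifies $P_i$, $Q$, and $N = \cN BL$, $f_\eta = \cN f$. The shape of $\hat B = UBL$ follows from $\cC BL = (0,\ldots,0,\Gamma L)^\top$ by~(P2); that of $\hat C = CU^{-1}$ from $C\cB(\cC\cB)^\dagger = [I_p, 0,\ldots,0]$ (the first $p$ rows of $\cC\cB(\cC\cB)^\dagger = I_{pr}$) together with $CV = 0$. Boundedness of every block is inherited from the Lyapunov-transformation property of~$U$, the identity $\dot U^{-1} = -U^{-1}\dot U\, U^{-1}$, and the boundedness assumptions in~(P1) and on~$f$.

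Part~(i) is a short calculation: the two requirements $\Gamma L(t) K(t) = I_p$ and $N(t)K(t) = 0$ package into the single equation $\hat B(t) K(t) = [0,\ldots,0,I_p,0]^\top$, which by left-multiplication by $U(t)^{-1}$ is equivalent to $BL(t) K(t) = \cB(t)(\cC\cB(t))^\dagger \begin{smallpmatrix}0\\I_p\end{smallpmatrix}$. Pointwise solvability is then exactly~\eqref{eq:cond-K-Gamma}, and the formula~\eqref{eq:K(t)=} supplies a $\cC^\infty$ solution because both pseudoinverses are smooth: their ranks $\rk BL(t) \equiv q$ (by~(P1)) and $\rk \cC\cB(t) \equiv pr$ (by Lemma~\ref{Lem:invU}) are constant.

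Part~(ii) contains the real work. With $q=p$, Lemma~\ref{Lem:Bdag=Wdag} gives $\cN\cB = 0$, so $N = \cN BL = 0$ immediately. To handle the $P_i$ I would use the rank factorization $\cB = \cW\cR$ from Step~1 of the proof of Lemma~\ref{Lem:Bdag=Wdag} together with the identity $\cB(\cC\cB)^\dagger = \cW(\cC\cW)^{-1}$ (an easy consequence of $\cB = \cW\cR$, $\cR\cR^\dagger = I_{pr}$, and invertibility of $\cC\cW$; compare Step~3 of the same proof). Since $\cR$ has full row rank $pr$, $\cN\cB = 0$ propagates to $\cN\cW = 0$; differentiating yields $\dot\cN\cW = -\cN\dot\cW$, and hence
\[
(\dot\cN + \cN A)\cW \;=\; -\cN(\ddt - A)\cW \;=\; \big[\,0,\ldots,0,\ -\cN(\ddt-A)^r W\,\big],
\]
because the first $r-1$ columns of $(\ddt-A)\cW$ are themselves columns of $\cW$. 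Therefore $[P_1,\ldots,P_r] = [0,\ldots,0,Y]\,(\cC\cW)^{-1}$ with $Y := -\cN(\ddt-A)^r W$.

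The closing step is purely algebraic. Repeating the computation of Lemma~\ref{Lem:invU} with $W$ in place of $BL$ (using $CA^k W = CA^k BL R_1 = 0$ for $k \leq r-2$) shows that the $pr \times pr$ matrix $\cC\cW$ is block anti-triangular with invertible anti-diagonal blocks $\pm CA^{r-1}W$; in particular its first block row is zero except for the $(1,r)$-entry. The identity $((\cC\cW)(\cC\cW)^{-1})_{1,j} = \delta_{1,j}I_p$ then forces $((\cC\cW)^{-1})_{r,j} = 0$ for all $j \neq 1$, so only the first block of $[0,\ldots,0,Y](\cC\cW)^{-1}$ survives and $P_2 = \cdots = P_r = 0$. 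The main obstacle, and the reason this route needs both Lemma~\ref{Lem:Bdag=Wdag} and the anti-triangular structure of $\cC\cW$, is that for $m > p$ the pseudoinverse $(\cC\cB)^\dagger$ does not have the required sparsity on its last block row; passing through $\cW$ turns a pseudoinverse into a genuine inverse that does.
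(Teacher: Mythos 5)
Your proposal is correct, and for the normal-form computation, part~(i), and the identity $N=\cN BL=\cN\cB\begin{smallbmatrix}I_m\\0\end{smallbmatrix}=0$ it coincides with the paper's argument; the only genuine divergence is how you kill $P_2,\ldots,P_r$. The paper stays with $\cC\cB$: it right-multiplies $[P_1,\ldots,P_r]$ by $\cC\cB$, invokes the statement of Lemma~\ref{Lem:Bdag=Wdag} (i.e.\ \eqref{eq:B=BCBdagCB}) together with $\cN\cB=0$ to obtain $[P_1,\ldots,P_r]\,\cC\cB=\big[0,\ldots,0,-\cN\big(\ddt-A\big)^r(BL)\big]$, and then peels off $P_r=0$, $P_{r-1}=0,\ldots$ successively from the anti-triangular structure~\eqref{eq:CB} and $\rk\Gamma L(t)=p$. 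You instead reach inside the proof of Lemma~\ref{Lem:Bdag=Wdag}, extract the factorization $\cB=\cW\cR$, and upgrade the pseudoinverse to a genuine inverse via $\cB(\cC\cB)^\dagger=\cW(\cC\cW)^{-1}$ (valid because $\cC\cW$ is invertible and $\cR$ has full row rank, so $(\cC\cW\cR)^\dagger=\cR^\dagger(\cC\cW)^{-1}$ and $\cR\cR^\dagger=I_{pr}$), then read off the result from the last block row of $(\cC\cW)^{-1}$; this requires redoing the \eqref{eq:CB}-type computation for $\cC\cW$, which is legitimate since $CA^kW=CA^kBL\,R_1=0$ for $k\le r-2$ and $CA^{r-1}W$ is invertible when $q=p$. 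Your route yields the one-shot identification $[P_1,\ldots,P_r]=[0,\ldots,0,Y](\cC\cW)^{-1}$, including an explicit formula for $P_1$, at the cost of the extra identity for the pseudoinverse of a product and the anti-triangularity of $\cC\cW$; the paper's route avoids both and needs only the already-established identity~\eqref{eq:B=BCBdagCB}, at the cost of a successive (rather than direct) elimination. Since both arguments rest on the same two structural facts ($\cN\cB=0$ and the anti-triangular block structure induced by the relative degree), the difference is one of bookkeeping rather than substance, and your version is a valid alternative proof.
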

\begin{proof} By the choice of~$U$ as in~\eqref{eq:U(t)} and the representation of its inverse as in Lemma~\ref{Lem:invU} it follows immediately that $\hat A, \hat B, \hat C, \hat f$ have the structure as in the statement of the theorem, cf.\ also~\cite[Thm.~3.5]{IlchMuel07}. Boundedness of all entries follows from the fact that~$U$ is a Lyapunov transformation. It remains to show~(i) and~(ii).

(i): Existence of~$K$ with the mentioned properties is, in view of~\eqref{eq:ABC-trafo}, equivalent to
\begin{equation}
    BL(t) K(t) = U(t)^{-1} \begin{smallbmatrix} 0_{p(r-1)\times p}\\ I_p\\ 0_{(n-pr)\times p}\end{smallbmatrix} \stackrel{\eqref{eq:U(t)inv}}{=} \cB(t) \big(\cC\cB(t)\big)^\dagger \begin{smallbmatrix} 0_{p(r-1)\times p}\\ I_p\end{smallbmatrix} \label{eq:BLK}
\end{equation}
for some~$K\in\cC^\infty(\R\to\R^{m\times p})$. Clearly,~\eqref{eq:BLK} implies~\eqref{eq:cond-K-Gamma}. Conversely, if~\eqref{eq:cond-K-Gamma} holds, then
\begin{equation}\label{eq:BLX}
    \exists\, X:\R\to\R^{m\times pr}:\ \cB(t) \big(\cC\cB(t)\big)^\dagger \begin{smallbmatrix} 0 \\ I_p\end{smallbmatrix}= B L(t) X(t)
\end{equation}
and hence~$K$ as in~\eqref{eq:K(t)=} satisfies
\begin{align*}
     B L(t) K(t) & \stackrel{\eqref{eq:K(t)=}}{=} B L(t) \big(BL(t)\big)^\dagger \cB(t) \big(\cC\cB(t)\big)^\dagger \begin{smallbmatrix} 0 \\ I_p\end{smallbmatrix} \\
     &\stackrel{\eqref{eq:BLX}}{=} B L(t) \big(BL(t)\big)^\dagger B L(t) X(t) \\
     &= BL(t) X(t) \stackrel{\eqref{eq:BLX}}{=} \cB(t) \big(\cC\cB(t)\big)^\dagger \begin{smallbmatrix} 0 \\ I_p\end{smallbmatrix}.
\end{align*}
In view of~\eqref{eq:BLK}, this finishes the proof of~(i).
%This implies
%\[
%    \begin{bmatrix} \ast\\ \Gamma L(t) K(t) \end{bmatrix} = \cC B L(t) K(t) = \begin{bmatrix} 0 \\ \Gamma L(t) \Gamma^\top\end{bmatrix},
%\]
%and therefore, $\Gamma L(t)\big( K(t) - \Gamma^\top) = 0$ which gives $K(t) = \Gamma^\top$ by full row rank of $\Gamma L(t)$ for all $t\in\R$.

(ii): If $q=p$, then Lemma~\ref{Lem:Bdag=Wdag} yields that $\cN(t) \cB(t) = 0$ for all $t\in\R$. This implies
\begin{align*}
    N(t) \stackrel{\eqref{eq:ABC-trafo}}{=} [0, I_{n-pr}] U(t) BL(t) \stackrel{\eqref{eq:U(t)}}{=} \cN(t) BL(t) = \cN(t) \cB(t) \begin{smallbmatrix} I_m\\ 0\end{smallbmatrix} = 0
\end{align*}
for all $t\in\R$. It remains to show that $P_2 = \ldots = P_r = 0$. Fix $t\in\R$ and note that
$[P_1(t), \ldots, P_r(t), Q(t)] \stackrel{\eqref{eq:BIF}}{=} [0, I_{n-pr}] \hat A(t) \stackrel{\eqref{eq:ABC-trafo}}{=} [0, I_{n-pr}] (U(t)A+\dot U(t))U(t)^{-1} \stackrel{\eqref{eq:U(t)},\eqref{eq:U(t)inv}}{=} \big(\cN(t) A +\dot \cN(t)\big) [\cB(t) \big(\cC\cB(t)\big)^\dagger, V]$,
hence
\begin{align*}
   [P_1(t), \ldots, P_r(t)] \cC \cB(t) &\!=\! \big(\cN(t) A \!+\!\dot \cN(t)\big) \cB(t) \big(\cC\cB(t)\big)^\dagger \cC \cB(t) \\
    &\stackrel{\eqref{eq:B=BCBdagCB}}{=} \big(\cN(t) A +\dot \cN(t)\big) \cB(t) .
\end{align*}
Since $\cN(\cdot)\cB(\cdot)=0$ we find that $\ddt \big(\cN(\cdot)\cB(\cdot)\big)=0$, thus $\dot\cN(t) \cB(t) = - \cN(t) \dot \cB(t)$. Therefore, we have
\begin{align*}
 &[P_1(t), \ldots, P_r(t)] \cC \cB(t)  = \big(\cN(t) A +\dot \cN(t)\big) \cB(t) \\
   & = - \cN(t) \big(\ddt - A\big)\big(\cB(t)\big) \\
   & = - \cN(t) \left[\left(\ddt - A\right) \big(BL(t)\big), \ldots, \left(\ddt - A\right)^{r} \big(BL(t)\big)\right] \\
   & \stackrel{\cN(t)\cB(t)=0}{=} \left[0,\ldots,0, - \cN(t) \big(\ddt - A\big)^r\big(BL(t)\big)\right].
\end{align*}
We may infer, using~\eqref{eq:CB}, that $P_r(t) \Gamma L(t) = 0$, hence $P_r(t)=0$ since $\Gamma L(t)$ has full row rank~$p$ by~(P2). Successively we obtain $P_{r-1}(t) = \ldots = P_2(t) = 0$ and this finishes the proof of the theorem.
\end{proof}

The time-varying Byrnes-Isidori form from~\cite{IlchMuel07} has a certain uniqueness property as derived in~\cite[Thm.~B.7]{BergIlch15}. With the same proof we obtain the following result.

\begin{Cor}\label{Cor:unique}
Consider a system~\eqref{eq:ABC} with (P1) and (P2) such that $q=p$ for~$q$ in (P1). Then uniqueness of the entries in the normal form~\eqref{eq:BIF} holds as follows:
\begin{enumerate}
  \item the entries $[R_1, \ldots, R_r] = C A^r \cB\big(\cC \cB\big)^\dagger$ are uniquely defined;
  \item the time-varying linear (sub-)system $(Q, P_1, S)$ is unique up to $\big( (WQ + \dot W) W^{-1}, WP_1, SW^{-1}\big)$ for any Lyapunov transformation $W\in\cC^\infty(\R\to\Gl_{n-rm}(\R))$.
\end{enumerate}
\end{Cor}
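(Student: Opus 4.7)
The plan is to follow the template of~\cite[Thm.~B.7]{BergIlch15} with only cosmetic adjustments for the factor~$L(t)$ and the rectangular input, since the structure of~$\hat A$, $\hat B$, $\hat C$ is exactly the same as in the classical time-varying Byrnes--Isidori form. The key observation at the outset is that $\cC$ is constant, so the only freedom in constructing~$U$ in~\eqref{eq:U(t)} lies in the choice of the \emph{constant} basis matrix~$V$ of $\ker \cC$.

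For part~(i), I would exploit that the $r$-th output block of~$U$ is exactly the constant matrix $CA^{r-1}$. Its derivative therefore vanishes, so the corresponding block row of $\hat A = (UA+\dot U)U^{-1}$ collapses to $CA^r\, U^{-1}$. Using the explicit formula $U^{-1} = \big[\cB(\cC\cB)^\dagger,\, V\big]$ from Lemma~\ref{Lem:invU} and matching against the normal form~\eqref{eq:BIF} immediately yields $[R_1,\ldots,R_r] = CA^r\cB(\cC\cB)^\dagger$ as well as $S = CA^r V$; only the second quantity sees the choice of~$V$, which proves~(i).

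For part~(ii), the idea is to quantify how a change of~$V$ propagates through the construction. Any two bases $V_1,V_2\in\R^{n\times(n-\rho)}$ of $\ker\cC$ are related by a constant $W_0\in\Gl_{n-\rho}(\R)$ with $V_1 = V_2 W_0$, and since both factors have full column rank one has $V_1^\dagger = W_0^{-1}V_2^\dagger$. Consequently the corresponding $\cN_i$ satisfy $\cN_1 = W_0^{-1}\cN_2$, so $U_1 = T U_2$ with the constant block-diagonal matrix $T := \diag(I_{pr}, W_0^{-1})$. Because $T$ is constant, $\hat A_1 = T\hat A_2 T^{-1}$, $\hat B_1 = T\hat B_2$, $\hat C_1 = \hat C_2 T^{-1}$, and a direct block comparison shows that the integrator chain, the row $[R_1,\ldots,R_r]$ and $\Gamma L$ in~$\hat B$ are all preserved, while
\[
    (Q,P_1,S) \longmapsto \big( W_0^{-1} Q W_0,\ W_0^{-1} P_1,\ S W_0\big),
\]
which matches the stated rule with the (constant, hence Lyapunov) $W = W_0^{-1}$. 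To upgrade to arbitrary smooth time-varying Lyapunov transformations~$W$, I would simply note that setting $\tilde\eta = W(t)\eta$ is a legitimate further change of coordinates on the zero-dynamics state and, by the standard time-varying transformation rule, induces exactly $(Q,P_1,S)\mapsto ((WQ+\dot W)W^{-1}, WP_1, SW^{-1})$.

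The main obstacle, modest though it is, is the invariance check in part~(ii): one has to verify that nothing in the top-left $pr\times pr$ block of~$\hat A$ and nothing in the nonzero entry $\Gamma L$ in~$\hat B$ is disturbed by~$T$, which is where the block-diagonal structure of~$T$ and the $I_{pr}$ block in its upper-left corner are essential. The hypothesis $q=p$ enters via Theorem~\ref{Thm:BIF}(ii), without which $N\neq 0$ and $P_2,\ldots,P_r\neq 0$ would appear and the $\eta$-subsystem would no longer be the triple $(Q,P_1,S)$.
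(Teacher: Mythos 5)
Your argument establishes less than the corollary asserts. You only quantify the freedom \emph{inside} the particular construction~\eqref{eq:U(t)} -- the choice of the constant basis matrix~$V$ of $\ker\cC$ -- and then observe that further coordinate changes $\eta\mapsto W(t)\eta$ reproduce the stated transformation rule. That shows the family of forms produced by the construction is contained in one equivalence class; it does not show that \emph{every} representation of~\eqref{eq:ABC} in the normal form~\eqref{eq:BIF} (with $N=0$, $P_2=\dots=P_r=0$) has the same $[R_1,\dots,R_r]$ and a Lyapunov-equivalent $(Q,P_1,S)$, which is the actual content of the corollary (the paper obtains it by ``the same proof'' as \cite[Thm.~B.7]{BergIlch15}, i.e.\ a statement about any two normal-form representations). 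The distinction matters: in the paper's $q>p$ counterexample right after the corollary, the failure of uniqueness is exhibited between a system \emph{already given in the form} (so not obtained via~\eqref{eq:U(t)} for any $V$) and the constructed form. Your argument never compares the constructed form with an arbitrary one, so it could not even detect that phenomenon, and in particular it does not justify the words ``uniquely defined'' in~(i) or ``unique up to'' in~(ii).

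The missing step -- the heart of the cited proof -- is a rigidity statement: if $U_1,U_2$ are any two Lyapunov transformations bringing~\eqref{eq:ABC} into~\eqref{eq:BIF} with $N=0$ and $P_2=\dots=P_r=0$, then $T:=U_1U_2^{-1}$ must have the block form $\diag(I_{pr},W)$ with $W$ a Lyapunov transformation. The top $pr$ rows of any such $U_i$ are forced to equal $\cC$, since the form forces the first coordinates to be $y,\dot y,\dots,y^{(r-1)}$ and $y^{(k)}=CA^kx$ for $k\le r-1$ by (P2); hence $T=\begin{smallbmatrix} I_{pr} & 0\\ Z & W\end{smallbmatrix}$, and one must prove $Z=0$. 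This is where the structural zeros (and thus $q=p$, via Theorem~\ref{Thm:BIF}(ii)) are really used: writing $\eta_1=Z(t)\big(y,\dot y,\dots,y^{(r-1)}\big)+W(t)\eta_2$ and demanding that the $\eta_1$-equation contain no $u$-term and no $y^{(i-1)}$-terms for $i\ge 2$ first gives $Z_r\,\Gamma L(t)=0$, hence $Z_r=0$ because $\rk \Gamma L(t)=p$, and then recursively $Z_{r-1}=\dots=Z_1=0$ -- an argument analogous to part~(ii) of the proof of Theorem~\ref{Thm:BIF}. Only after this rigidity is in place do your (correct, but easy) computations -- $[R_1,\dots,R_r]=CA^r\cB(\cC\cB)^\dagger$ independent of $V$, a constant $W_0$ for a change of basis of $\ker\cC$, and the $\eta$-transformation rule -- yield the full statement; without it, a connecting transformation with $Z\neq 0$ would change the $R_i$ and the $\eta$-data, and you have not excluded it.
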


We stress that the uniqueness property from Corollary~\ref{Cor:unique} is not true for $q>p$ in general. Consider~\eqref{eq:ABC} with $A=\begin{smallbmatrix} 0&1\\ 0&1\end{smallbmatrix}$, $B=\begin{smallbmatrix} 1&1\\ 1&3\end{smallbmatrix}$, $C=[1,0]$, $f=0$ and $L=I_2$, which is in the form~\eqref{eq:BIF} with $Q=1$. Computing the normal form~\eqref{eq:BIF} with~$U$ as in~\eqref{eq:U(t)} gives $\hat A = \begin{smallbmatrix} 2 & 1\\ -2 & 1\end{smallbmatrix}$, $\hat B = \begin{smallbmatrix} 1 & 1\\ -1 & 1\end{smallbmatrix}$, $\hat C = C$. Therefore, the new $Q$-block is given by $\hat Q = -1$, and it is straightforward to show that it cannot be obtained by a Lyapunov transformation of~$Q$. In the case $q>p$ it is thus important to follow exactly the construction procedure which leads to the transformation~$U$ in~\eqref{eq:U(t)}.

%That~$Q$ in~$\hat A$ is \emph{not} unique up to a Lyapunov transformation in general is shown in the following Example~\ref{Ex:not-unique-Q}.

Also note that condition~\eqref{eq:cond-K-Gamma} is not always satisfied. Consider~\eqref{eq:ABC} with $A=\begin{smallbmatrix} 0&1&0&0\\ 0&0&1&0\\ 0&0&0&1\\ 0&0&0&0\end{smallbmatrix}$, $B=\begin{smallbmatrix} 0&0\\ 1&0\\ 0&1\\ 1&0\end{smallbmatrix}$, $C=[1,0,0,0]$, $f=0$ and $L=I_2$ as a counterexample.

\begin{Rem}\label{Rem:ZD}
An important system property in high-gain based adaptive control is a bounded-input, bounded-output property of the internal dynamics of the system, see e.g.~\cite{BergLe18a, IlchRyan08, IlchRyan02b}. In the case of linear time-invariant systems, this is implied by (but not equivalent to) asymptotic stability of the zero dynamics of the system; the latter property is extensively studied in the literature, see~\cite{ByrnWill84, Mare84}, and commonly known as the \emph{minimum phase} property, although this is not completely correct, see~\cite{IlchWirt13} and the references therein.

The usual assumption on system~\eqref{eq:ABC} would be that the zero dynamics of the linear part (ignoring the bounded nonlinearity) are uniformly exponentially stable. However, for system~\eqref{eq:ABC} with~$f=0$, a fixed output $y\in\cC^\infty(\R\to\R^p)$ does not uniquely define (up to initial values) a corresponding state~$x$ and input~$u$ since the actuator redundancy leads to $BL(t)$ not having full column rank in general. In other words, the zero dynamics are not necessarily \emph{autonomous}, cf.~\cite{BergIlch15}. To circumvent this problem we apply a rectangular input transformation~$u(t) = K(t) G(t)^{-1} v(t)$ to system~\eqref{eq:ABC} for~$K$ and~$G$ such that $\Gamma L(t) K(t) = G(t)$ and $N(t) K(t) = 0$. Since
\[
  BL(t) u(t) \!=\! BL(t) K(t) G(t)^{-1} v(t) \! =\! \cB(t) \big(\cC\cB(t)\big)^\dagger \begin{smallbmatrix} 0 \\ I_p\end{smallbmatrix} v(t)
\]
by Theorem~\ref{Thm:BIF}, this leads to the time-varying linear system $\left(A, \cB(\cdot) \big(\cC\cB(\cdot)\big)^\dagger \begin{smallbmatrix} 0 \\ I_p\end{smallbmatrix}, C\right)$ and hence we may assume that its zero dynamics are uniformly exponentially stable; this assumption is independent of the existence of~$K$ and~$G$, which is actually characterized by condition~\eqref{eq:cond-K-Gamma} since an additional invertible transformation does not change this condition: Assuming existence of $K\in\cC^\infty(\R\to\R^{m\times p})$ and $G\in\cC^\infty(\R\to{\Gl}_p(\R))$ such that $\Gamma L(t) K(t) = G(t)$ and $N(t) K(t) = 0$ for all $t\in\R$ leads to $\Gamma L(t) F(t) = I_p$ and $N(t) F(t) = 0$ for $F(t) = K(t) G(t)^{-1}$, $t\in\R$. We stress that by Theorem~\ref{Thm:BIF}, clearly~\eqref{eq:cond-K-Gamma} is always satisfied in the case $q=p$.

It is straightforward to show that there is a one-to-one correspondence between the zero dynamics of $\left(A, \cB(\cdot) \big(\cC\cB(\cdot)\big)^\dagger \begin{smallbmatrix} 0 \\ I_p\end{smallbmatrix}, C\right)$ and the solution set of the time-varying linear differential equation $\dot \eta(t) = Q(t) \eta(t)$ for~$Q$ as in~\eqref{eq:BIF}. Therefore, our assumption simplifies to assuming that $\dot \eta(t) = Q(t) \eta(t)$ is \emph{uniformly exponentially stable}, i.e., there exist $M, \mu>0$ such that for any solution $\eta\in\cC^1(\R\to\R^n)$ of $\dot \eta(t) = Q(t) \eta(t)$ we have $\|\eta(t)\| \le M e^{-\mu (t-t_0)}\|\eta(t_0)\|$ for all $t\ge t_0\ge 0$.
%If $L(\cdot)$ is constant, then $U\in{\Gl}_n(\R)$ and $\hat A, \hat B, \hat C$ in Theorem~\ref{Thm:BIF} are constant matrices. In this case, we obtain from~\cite[Lem.~4.3.9]{Berg14a}: $\dot \eta(t) = Q \eta(t)$ is exponentially stable (equivalently, $\sigma(Q)\subseteq \C_-$) if, and only if, $\begin{smallbmatrix} A-\lambda I_n & \cB \big(\cC\cB\big)^\dagger \begin{smallbmatrix} 0 \\ I_p\end{smallbmatrix}\\ C & 0\end{smallbmatrix}$ is invertible for all $\lambda\in \overline{\C_+}$.
\end{Rem}

\begin{Ex}\label{Ex:runex1}
As a running example we consider a linearized model for the lateral motion of a Boeing 737 aircraft, which is taken from~\cite[Sec.~5.4]{TaoChen04}. The model is of the form~\eqref{eq:ABC} with $n=5$, $m=4$, $p=2$,
\begin{align*}
  A &= \begin{smallbmatrix} -0.13858 & 14.326 & -219.04 & 32.167 & 0\\ -0.02073 & -2.1692 & 0.91315 & 0.000256 & 0\\ 0.00289 & -0.16444 & -0.15768 & -0.00489 & 0\\ 0 & 1 & 0.00618 & 0 & 0\\ 0 & 0 & 1 & 0 & 0\end{smallbmatrix},\\
  B &= \begin{smallbmatrix} 0.15935 & 0.15935 & 0.00211 & 0.00211\\ 0.01264 & 0.01264 & 0.21326 & 0.21326\\ -0.12879 & -0.12879 & 0.00171 & 0.00171\\ 0 & 0 & 0 &0\\ 0 &0 &0&0\end{smallbmatrix},\quad C =\begin{smallbmatrix} 0 & 0 & 0 & 1 & 0\\ 0 & 0 & 0 & 0 & 1\end{smallbmatrix} \quad \text{and}\\
  x &= (v_b, p_b, r_b, \phi, \psi)^\top,\quad u = (d_{r1}, d_{r2}, d_{a1}, d_{a2})^\top.
\end{align*}
Here $v_b$ denotes the lateral velocity, $p_b$ the roll rate, $r_b$ the yaw rate, $\phi$ the roll
angle and $\psi$ the yaw angle. The roll angle $\phi$ and the yaw angle $\psi$ are chosen as outputs of the system. The inputs consist of the rudder position $d_{r1}+d_{r2}$ and the aileron position
$d_{a1}+d_{a2}$, so we see that we have two groups of actuators which are both double redundant. For this example we assume that no faults occur in the actuators $d_{r1}$ and $d_{a1}$, while $d_{r2}$ and $d_{a2}$ experience some faults, i.e.,
\[
    L(t) = \diag\big(1, l_2(t),1,l_4(t)\big),\quad f(t,u) = B \big( 0, f_2(t,u_2), 0, f_4(t,u_4)\big)^\top,
\]
where $u=(u_1,u_2,u_3,u_4)^\top$, for some bounded $l_2,l_4\in\cC^\infty(\R\to[0,1])$ (with bounded derivatives) and bounded $f_2,f_4\in\cC(\R^2\to\R)$ to be specified later. We compute the transformation matrix~$U$ as in~\eqref{eq:U(t)} and the normal form~\eqref{eq:BIF}. Since $CB = 0$ and $\Gamma = CAB = \begin{smallbmatrix} 0.01184 &   0.01184 &   0.21327 &  0.21327 \\ -0.12879 &  -0.12879& 0.00171 &  0.00171\end{smallbmatrix}$ has full row rank, we find that the relative degree of the system is $r=2$. Using MATLAB we calculate that, approximately,
\begin{align*}
   & U \!=\! \begin{smallbmatrix} 0& 0&  0 & 1 &  0\\ 0&0&0&0&1\\ 0&1& 0.00618&0&0\\ 0&0&1&0&0\\ 1 & -0.0198 &   1.23534 &  -14.16314& 219.17412\end{smallbmatrix}\!,\ \hat f(t,u) \!=\! \begin{bmatrix} 0\\ \Gamma\\ 0\end{bmatrix} \! \begin{smallpmatrix} 0\\ f_2(t,u_2)\\ 0\\ f_4(t,u_4)\end{smallpmatrix},\\
   & \hat A  \!=\! \begin{smallbmatrix} 0&0&1&0&0\\ 0&0&0&1&0\\ -0.29312&  4.53957 & -2.17063&   0.95118&  -0.02071\\ 0.03604& -0.63341& -0.16438&  -0.16023&  0.00289\\ 30.2546& 29.50071 & 0 & 0 & -0.1346\end{smallbmatrix},\ \  \hat B(t) = \begin{bmatrix} 0\\ \Gamma L(t)\\ 0\end{bmatrix}
\end{align*}
and $\hat C = [I_2, 0]$. In particular,~$U$ and~$\hat A$ are time-invariant and independent of~$l_2$ and~$l_4$, and~$U$ is a Lyapunov transformation. Furthermore, we find that~$Q$ as in Theorem~\ref{Thm:BIF} is given by $Q = -0.1346 \in \C_-$, hence $\dot \eta(t) = Q \eta(t)$ is exponentially stable.
\end{Ex}

%
%%%%%%%%%%%%%%%%%%%%%%%%%%%%%%%%%%%%%%%%%%%%%%%%%%%%%%%%%%%%%%%%%%%%%%%%%%%%%%%%%%%%%%%%%%%%%%%
%\section{The zero dynamics}\label{Sec:ZD}
%%%%%%%%%%%%%%%%%%%%%%%%%%%%%%%%%%%%%%%%%%%%%%%%%%%%%%%%%%%%%%%%%%%%%%%%%%%%%%%%%%%%%%%%%%%%%%%
%

\vspace*{-3mm}

%
%%%%%%%%%%%%%%%%%%%%%%%%%%%%%%%%%%%%%%%%%%%%%%%%%%%%%%%%%%%%%%%%%%%%%%%%%%%%%%%%%%%%%%%%%%%%%%%
\section{Fault tolerant control}\label{Sec:FTC}
%%%%%%%%%%%%%%%%%%%%%%%%%%%%%%%%%%%%%%%%%%%%%%%%%%%%%%%%%%%%%%%%%%%%%%%%%%%%%%%%%%%%%%%%%%%%%%%
%

\vspace*{-1mm}

%%%%%%%%%%%%%%%%%%%%%%%%%%%%%%%%%%%%%%%%%%%%%%%%%%%%%%%%%%%%%%%%%%%%%%%%%%%%%%%%%%%%%%%%%%%%%%%%%%%%%%%%%%%%%
\subsection{Preliminaries}\label{Ssec:FunCon_m=p}
%%%%%%%%%%%%%%%%%%%%%%%%%%%%%%%%%%%%%%%%%%%%%%%%%%%%%%%%%%%%%%%%%%%%%%%%%%%%%%%%%%%%%%%%%%%%%%%%%%%%%%%%%%%%%

Before we introduce a fault tolerant funnel controller we discuss some available results for nonlinear systems with arbitrary known relative degree and equal number of inputs and outputs without any faults. A funnel controller for such systems has been developed in~\cite{BergLe18a}. This controller is of the form
\begin{equation}\label{eq:fun-con}
\boxed{\begin{aligned}
e_0(t)&=e(t) = y(t) - y_{\rm ref}(t),\\
e_1(t)&=\dot{e}_0(t)+k_0(t)\,e_0(t),\\
e_2(t)&=\dot{e}_1(t)+k_1(t)\,e_1(t),\\
& \ \vdots \\
e_{r-1}(t)&=\dot{e}_{r-2}(t)+k_{r-2}(t)\,e_{r-2}(t),\\
k_i(t)&=1/(1-\varphi_i(t)^2\|e_i(t)\|^2),\quad i=0,\dots,r-1, \\
\end{aligned}
}
\end{equation}
with feedback law
\begin{equation}\label{eq:fb-m=p}
    u(t)= -k_{r-1}(t)\,e_{r-1}(t),
\end{equation}
where $y_{\rm ref}\in\mathcal{W}^{r,\infty}(\R_{\ge 0}\rightarrow \R^m)$ and $\varphi_i \in\Phi_{r-i}$ for $i=0,\ldots,r-1$. We stress that while the derivatives $\dot e_0,\ldots, \dot e_{r-2}$ appear in~\eqref{eq:fun-con}, they only serve as short-hand notations and may be resolved in terms of the tracking error, the funnel functions and the derivatives of these, cf.~\cite[Rem.~2.1]{BergLe18a}. After rewriting these variables, the controller is real-time capable.

The controller~\eqref{eq:fun-con},~\eqref{eq:fb-m=p} is shown to be feasible for a large class of nonlinear systems of the form
\begin{equation}\label{eq:nonlSys}
\begin{aligned}
y^{(r)}(t)&=f\big(d(t), T(y,\dot{y},\dots,y^{(r-1)})(t)\big)\\
&\quad + \Gamma\big(d(t), T(y,\dot{y},\dots,y^{(r-1)})(t)\big)\,u(t)\\
y|_{[-h,0]}&=y^0\in \mathcal{W}^{r-1,\infty}([-h,0]\rightarrow \R^m),
\end{aligned}
\end{equation}
where $h>0$ is the ``memory'' of the system, $r\in \N$ is the strict relative degree, and
\begin{itemize}
\item[(N1)] the disturbance satisfies $d\in \mathcal{L}^{\infty}(\R_{\ge 0}\rightarrow \R^p)$, $p\in \N$;
\item[(N2)] $f\in \mathcal{C}(\R^p\times \R^q\rightarrow \R^m),\ q\in \N$;
\item[(N3)] the high-frequency gain matrix function $\Gamma\in \mathcal{C}(\R^p\times \R^q\rightarrow \R^{m\times m})$  satisfies $\Gamma(d,\eta) + \Gamma(d,\eta)^\top > 0$ for all $(d,\eta)\in\R^p\times\R^q$;
\item[(N4)] $T:\mathcal{C}([-h,\infty)\rightarrow\R^{rm})\rightarrow \mathcal{L}_{\rm loc}^{\infty}(\R_{\ge 0}\rightarrow \R^q)$ is an operator with the following properties:
\begin{itemize}[a),leftmargin=0.5cm]
\item[a)] $T$ maps bounded trajectories to bounded trajectories, i.e, for all $c_1>0$, there exists $c_2>0$ such that for all $\zeta\in \mathcal{C}([-h,\infty)\rightarrow\R^{rm})$ with $\|\zeta(t)\|\le c_1$ for all $t\in [-h,\infty)$ we have $\sup_{t\in [0,\infty)}\|T(\zeta)(t)\|\le c_2$,
\item[b)] $T$ is causal, i.e, for all $t\ge 0$ and all $\zeta,\xi\in\mathcal{C}([-h,\infty)\rightarrow\R^{rm})$ with $\zeta|_{[-h,t)}=\xi|_{[-h,t)}$ we have $T(\zeta)|_{[0,t)}\overset{\rm a.a.}{=}T(\xi)|_{[0,t)}$, where ``a.a.'' stands for ``almost all''.
\item[c)] $T$ is locally Lipschitz continuous in the following sense: for all $t\ge 0$ and all $\xi\in\cC([-h,t]\to\R^{rm})$ there exist $\tau, \delta, c>0$ such that, for all $\zeta_1,\zeta_2\in \mathcal{C}([-h,\infty)\to\R^{rm})$ with $\zeta_i|_{[-h,t]}=\xi$ and $\|\zeta_i(s)-\xi(t)\|<\delta$ for all $s\in[t,t+\tau]$ and $i=1,2$, we have $\left\|\left(T(\zeta_1)-T(\zeta_2)\right)|_{[t,t+\tau]} \right\|_{\infty} \le c\left\|(\zeta_1-\zeta_2)|_{[t,t+\tau]}\right\|_{\infty}.$
    %for all $t\ge 0$ there exist $\tau, \delta, c>0$ such that for all $\zeta,\Delta\zeta\in \mathcal{C}([-h,\infty)\rightarrow\R^{rm})$ with $\Delta\zeta|_{[-h,t)}=0$ and $\|\Delta\zeta|_{[t,t+\tau]}\|_{\infty}<\delta$ we have
%\begin{align*} \left\|\left(T(\zeta+\Delta\zeta)-T(\zeta)\right)|_{[t,t+\tau]} \right\|_{\infty}\le c\|\Delta\zeta|_{[t,t+\tau]}\|_{\infty}.
%\end{align*}
\end{itemize}
\end{itemize}

In~\cite{BergLe18a,IlchRyan02b,IlchRyan09} it is shown that the class of systems~\eqref{eq:nonlSys} encompasses linear and nonlinear systems with strict relative degree and input-to-state stable internal dynamics and that the operator~$T$ allows for infinite-dimensional linear systems, systems with hysteretic effects or nonlinear delay elements, and combinations thereof.

In~\cite{BergLe18a}, the existence of global solutions of the initial value problem resulting from the application of the funnel controller~\eqref{eq:fun-con},~\eqref{eq:fb-m=p} to a system~\eqref{eq:nonlSys} is investigated. By a \emph{solution} of~\eqref{eq:fun-con}--\eqref{eq:nonlSys} on $[-h,\omega)$ we mean a function $y\in\cC^{r-1}([-h,\omega)\to\R^m)$, $\omega\in(0,\infty]$, with $\left.y\right|_{[-h,0]} = y^0$ such that $y^{(r-1)}|_{[0,\omega)}$ is weakly differentiable and satisfies the differential equation in~\eqref{eq:nonlSys} with~$u$ defined in~\eqref{eq:fb-m=p} for almost all $t\in[0,\omega)$; $y$ is called \emph{maximal}, if it has no right extension that is also a solution. Note that in~\cite{BergLe18a} a slightly stronger version of conditions~(N3) and~(N4)~c) is used. However, the proof does not change; in particular, regarding~(N4)~c), the existence part of the proof in~\cite{BergLe18a} relies on a result from~\cite{IlchRyan09} where the version from the present paper is used.

\vspace*{-3mm}

%
%%%%%%%%%%%%%%%%%%%%%%%%%%%%%%%%%%%%%%%%%%%%%%%%%%%%%%%%%%%%%%%%%%%%%%%%%%%%%%%%%%%%%%%%%%%%%%%
\subsection{Controller structure}\label{Ssec:ContStruc}
%%%%%%%%%%%%%%%%%%%%%%%%%%%%%%%%%%%%%%%%%%%%%%%%%%%%%%%%%%%%%%%%%%%%%%%%%%%%%%%%%%%%%%%%%%%%%%%
%

We introduce the fault tolerant funnel controller for systems of type~\eqref{eq:ABC} as an extension of the controller~\eqref{eq:fun-con},~\eqref{eq:fb-m=p} where we only change the feedback law~\eqref{eq:fb-m=p}. That is, the fault tolerant funnel controller consists of~\eqref{eq:fun-con} together with the new feedback law
\begin{equation}\label{eq:fb}
\boxed{
    u(t)= -k_{r-1}(t)\,K(t)\,e_{r-1}(t),
}
\end{equation}
where the reference signal and funnel functions have the following properties:
\begin{equation}\label{eq:con-ass-1}
\boxed{
\begin{aligned}
y_{\rm ref}&\in\, \mathcal{W}^{r,\infty}(\R_{\ge 0}\rightarrow \R^p),\\
\varphi_0&\in\, \Phi_r,\;\;
\varphi_1\in \Phi_{r-1},\;\ldots,\;\;
\varphi_{r-1}\in \Phi_{1}.\\
\end{aligned}
}
\end{equation}
We choose the bounded controller weight matrix function $K\in\cC^\infty(\R\to\R^{m\times p})$, if possible, such that
\begin{equation}\label{eq:con-ass-2}
\boxed{
\begin{aligned}
    & \exists\, \alpha>0:\ \Gamma L(t) K(t) + \big( \Gamma L(t) K(t) \big)^\top \geq \alpha I_p\ \\
    & \text{and}\quad N(t) K(t) = 0,\\
\end{aligned}
}
\end{equation}
where we use the notation from Theorem~\ref{Thm:BIF}. Note that condition~\eqref{eq:con-ass-2} is not always satisfied under the assumptions~(P1) and~(P2). Existence and possible choices for~$K$ are discussed in Subsection~\ref{Ssec:ConGain}. The first condition in~\eqref{eq:con-ass-2} is required to meet assumption~(N3) after a reformulation of the closed-loop system; the second condition is important to make the zero dynamics of~\eqref{eq:ABC} with the input transformation $u(t) = K(t) v(t)$ independent of the action of the new input~$v$, cf.\ Remark~\ref{Rem:ZD}. We stress that~\eqref{eq:fb} can be interpreted as~\eqref{eq:fb-m=p} multiplied with the controller weight~$K(t)$. %The application of the controller~\eqref{eq:fun-con},~\eqref{eq:fb} to a system~\eqref{eq:ABC} is illustrated in Figure~\ref{Fig:Sys-Con}.

In the sequel we investigate existence of solutions of the initial value problem resulting from the application of the funnel controller~\eqref{eq:fun-con},~\eqref{eq:fb} to a system~\eqref{eq:ABC}. Even if~\eqref{eq:ABC} is a linear system with~$f=0$ and~$L=I_m$, some care must be exercised with the existence of a solution of~\eqref{eq:ABC},~\eqref{eq:fun-con},~\eqref{eq:fb} since this closed-loop differential equation is time-varying, nonlinear and only defined on an open subset of $\R_{\ge 0}\times\R^n$. By a \emph{solution} of~\eqref{eq:ABC},~\eqref{eq:fun-con},~\eqref{eq:fb} on $[0,\omega)$ we mean a weakly differentiable function $x:[0,\omega)\to\R^n$, $\omega\in(0,\infty]$, which satisfies $x(0)=x^0$ and the differential equation in~\eqref{eq:ABC} with~$u$ defined in~\eqref{eq:fun-con},~\eqref{eq:fb} for almost all $t\in[0,\omega)$; $x$ is called \emph{maximal}, if it has no right extension that is also a solution.

\vspace*{-3mm}

%
%%%%%%%%%%%%%%%%%%%%%%%%%%%%%%%%%%%%%%%%%%%%%%%%%%%%%%%%%%%%%%%%%%%%%%%%%%%%%%%%%%%%%%%%%%%%%%%
\subsection{Feasibility of the controller}\label{Ssec:FeasCon}
%%%%%%%%%%%%%%%%%%%%%%%%%%%%%%%%%%%%%%%%%%%%%%%%%%%%%%%%%%%%%%%%%%%%%%%%%%%%%%%%%%%%%%%%%%%%%%%
%

We show feasibility of the controller~\eqref{eq:fun-con},~\eqref{eq:fb} for every system~\eqref{eq:ABC} which satisfies the assumptions~(P1),~(P2) and
\begin{enumerate}
  \item[(P3)] $U$ as in~\eqref{eq:U(t)} is a Lyapunov transformation,
  \item[(P4)] $\dot \eta(t) = Q(t) \eta(t)$ is uniformly exponentially stable for~$Q$ as in~\eqref{eq:BIF}.
  %\item[(A4):] there exists $\gamma>0$ such that $\Gamma \big( L(t) + L(t)^\top) \Gamma^\top\ge \gamma I_p$ for all $t\in\R$.
\end{enumerate}

We stress that assumptions (P1)--(P4) and condition~\eqref{eq:con-ass-2} are only of structural nature and hold for a large class of systems; the controller design~\eqref{eq:fun-con},~\eqref{eq:fb} does not depend on the specific system parameters.

Note that~(P3) is satisfied, if~\eqref{eq:cond-boundU} holds. Under assumptions~(P1)--(P3) it follows from Theorem~\ref{Thm:BIF} that the transformation matrix~$U$ from~\eqref{eq:U(t)} can be used for a state space transformation as follows. Setting $z(t) := U(t) x(t)$ we obtain from~\eqref{eq:ABC} that
\begin{align*}
    \dot z(t) & =  \big(U(t)A(t)+\dot U(t)\big)U(t)^{-1} z(t) + U(t)BL(t) u(t)\\
    &\quad + U(t) f\big(t,U(t)^{-1}z(t),u(t)\big)
\end{align*}
and $y(t) = CU(t)^{-1} z(t)$. By Theorem~\ref{Thm:BIF} this implies that
\begin{equation*}\label{eq:hatABC}
  \dot z(t) = \hat A(t) z(t) + \hat B(t) u(t) + \hat f\big(t,z(t),u(t)\big),\  y(t) = \hat Cz(t)
\end{equation*}
and this is equivalent to
\begin{equation}\label{eq:hatABC-y-eta}
\begin{aligned}
  y^{(r)}(t) &= \sum\nolimits_{i=1}^r R_i(t) y^{(i-1)}(t) + S(t)\eta(t) + \Gamma L(t) u(t)\\
  &\quad + f_r\big(t,y(t),\ldots,y^{(r-1)}(t),\eta(t),u(t)\big),\\
  \dot \eta(t) &= \sum\nolimits_{i=1}^r P_i(t) y^{(i-1)}(t) + Q(t)\eta(t) + N(t) u(t)\\
  &\quad + f_\eta\big(t,y(t),\ldots,y^{(r-1)}(t),\eta(t),u(t)\big),
\end{aligned}
\end{equation}
where $z(t) = \big(y(t),\ldots,y^{(r-1)}(t),\eta(t)\big)$. By~(P4) it further follows that $\dot \eta(t) = Q(t) \eta(t)$ is uniformly exponentially. Together with~\eqref{eq:con-ass-2} these are the main ingredients for the proof of the following result.

\begin{Thm}\label{Thm:FunCon-1}
Consider a~system~\eqref{eq:ABC} which satisfies assumptions (P1)--(P4). Let $y_{\rm ref}, \varphi_0,\ldots,\varphi_{r-1}$ be as in~\eqref{eq:con-ass-1} and $x^0\in\R^n$ be an initial value such that $e_0,\ldots,e_{r-1}$ as defined in~\eqref{eq:fun-con} satisfy
\begin{equation}\label{eq:ini-err}
\varphi_i(0)\|e_i(0)\|<1\quad \text{ for }i=0,\ldots,r-1.
\end{equation}
Assume that there exists a bounded $K\in\cC^\infty(\R\to\R^{m\times p})$ such that~\eqref{eq:con-ass-2} is satisfied. Then the funnel controller~\eqref{eq:fun-con},~\eqref{eq:fb} applied to~\eqref{eq:ABC} yields an initial-value problem which has a solution, and every solution can be extended to a maximal solution $x:\left[0,\omega\right)\rightarrow \R^n$, $\omega\in(0,\infty]$, which satisfies:
\begin{enumerate}
\item The solution is global (i.e., $\omega=\infty$).
\item The signal $u:\R_{\ge0}\to\R^m$, $k_0,\ldots,k_{r-1}:\R_{\ge0}\to\R$ and $x:\R_{\ge0}\to\R^n$ are bounded.
\item The functions $e_0,\ldots,e_{r-1}:\R_{\ge0}\to\R^p$ evolve in their respective performance funnels and are uniformly bounded away from the funnel boundaries in the sense:
\begin{equation}\label{eq:error-bound}
\forall\, i=0,\dots,r-1\ \exists\, \varepsilon_i>0\ \forall\, t>0:\ \|e_i(t)\|\le \varphi_i(t)^{-1} - \varepsilon_i.
\end{equation}
In particular, the error $e(t)= y(t)-y_{\rm ref}(t)$ evolves in the funnel $\mathcal{F}_{\varphi_0}$ as in~\eqref{eq:perf_funnel} and stays uniformly away from its boundary.
\end{enumerate}
\end{Thm}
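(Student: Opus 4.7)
The plan is to reduce the closed-loop system to the framework of~\eqref{eq:nonlSys} so that the feasibility result from~\cite{BergLe18a} can be applied to the funnel controller~\eqref{eq:fun-con} with the effective scalar-gain input $v(t) := -k_{r-1}(t)\,e_{r-1}(t)$, and then to transfer the conclusions back to the original coordinates.

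First I would use (P3) and Theorem~\ref{Thm:BIF} to perform the Lyapunov change of coordinates $z(t) = U(t) x(t)$, bringing~\eqref{eq:ABC} into the block form~\eqref{eq:hatABC-y-eta} with all entries $R_i, P_i, S, Q, N$ and the nonlinearities $f_r, f_\eta$ bounded. Substituting the input transformation $u(t) = K(t) v(t)$ and exploiting the two identities in~\eqref{eq:con-ass-2} then has exactly the two effects tailored to the high-gain framework: the condition $N(t) K(t) = 0$ decouples $v$ from the $\eta$-equation, making the zero dynamics autonomous with respect to the new input, while the positivity condition $\Gamma L(t) K(t) + (\Gamma L(t) K(t))^\top \ge \alpha I_p$ supplies the sign condition~(N3) on the effective high-frequency gain. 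By~(P4) the $\eta$-subsystem $\dot \eta = \sum_i P_i(t) y^{(i-1)} + Q(t)\eta + f_\eta(\cdot)$ has a uniformly exponentially stable homogeneous part, and together with the boundedness of $P_i$ and $f_\eta$ the usual variation-of-parameters estimate gives a BIBO bound on $\eta$ in terms of $\xi := (y, \dot y, \ldots, y^{(r-1)})$.

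Next I would encode the internal dynamics into an operator $T$ acting on $\xi$, as required by~(N4): boundedness-to-boundedness is the BIBO estimate just mentioned; causality is immediate from the ODE flow; and the local Lipschitz property~(N4)(c) follows from Gr\"onwall's inequality applied to the $\eta$-equation on compact windows, using continuity of $f_\eta$. The time-varying bounded matrix functions $R_i(t), S(t), \Gamma L(t) K(t)$ can be packaged into a single bounded disturbance signal $d(\cdot)$, so that the $y$-equation of~\eqref{eq:hatABC-y-eta} is precisely of the form~\eqref{eq:nonlSys} with strict relative degree $r$, satisfying (N1)--(N4). The initial data~\eqref{eq:ini-err} match the standing hypothesis of the feasibility theorem in~\cite{BergLe18a}, which then delivers a maximal global solution and the boundedness assertions for $v$, $k_0, \ldots, k_{r-1}$ and $\xi$, together with the strict bound~\eqref{eq:error-bound} on $e_0, \ldots, e_{r-1}$.

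The conclusions for the original system are then routine consequences: $u = K v$ is bounded because $K$ is bounded by assumption and $v$ is bounded by the above, $\eta$ is bounded by the BIBO property of $T$, hence $z = (\xi, \eta)$ is bounded, and since $U$ is a Lyapunov transformation $U^{-1}$ is bounded, yielding boundedness of $x = U^{-1} z$. The main obstacle I anticipate is the careful verification of~(N4)(c) for the operator $T$ under only continuity of $f_\eta$, together with the rigorous absorption of the $u$-dependence of $f_r$ (which is bounded, but does depend on $u$ and therefore on $v$) into the $(d, T)$-structure of~\eqref{eq:nonlSys}; this either requires showing that bounded $v$-dependent perturbations of $f$ in the framework of~\cite{BergLe18a} are harmless for the high-gain argument, or invoking a slight extension of that result. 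Once this technical point is settled, the proof reduces to a mechanical assembly of the pieces above.
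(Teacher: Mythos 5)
Your overall strategy (transform with $U$ via Theorem~\ref{Thm:BIF}, use the two conditions in~\eqref{eq:con-ass-2} to decouple $\eta$ from the new input and to obtain the sign condition, build an operator $T$ from the internal dynamics by variation of constants, and invoke~\cite{BergLe18a}) is the same as the paper's, but the step you yourself flag as ``the main obstacle'' is exactly where the proof lives, and your proposal does not close it. Since $f_r$ and $f_\eta$ depend on $u=Kv$ (and on $\eta$), the transformed system is \emph{not} of the form~\eqref{eq:nonlSys}: neither the disturbance $d$ nor the operator $T$ in that framework may depend on the input. The paper resolves this with a two-step, trajectory-wise argument: first it establishes existence of a maximal solution of the actual closed loop~\eqref{eq:ABC},~\eqref{eq:fun-con},~\eqref{eq:fb} directly, by expressing $e_{r-1}$ as a function $G\big(t,y(t),\dots,y^{(r-1)}(t)\big)$ (as in Step~1 of the proof of \cite[Thm.~3.1]{BergLe18a}) and applying the ODE existence theorem from~\cite{Walt98} on a relatively open domain; only then does it substitute this fixed solution into $f_r$, $f_\eta$, exploiting their global boundedness together with~(P4) to define an input-independent disturbance $d\in\cL^\infty(\R_{\ge 0}\to\R^p)$ (extended by zero beyond~$\omega$) and a purely linear operator $T(\zeta)(t)=\sum_i R_i(t)\zeta_i(t)+\sum_i\int_0^t S(t)\Phi_Q(t,s)P_i(s)\zeta_i(s)\,{\rm d}s$. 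The very same trajectory then solves a bona fide system of class~\eqref{eq:nonlSys}, and \cite[Thm.~3.1]{BergLe18a} yields $\omega=\infty$ and statements~(ii),~(iii). Without this device (or a proved extension of~\cite{BergLe18a} allowing input-dependent bounded perturbations), your reduction is incomplete: you cannot even get existence of the closed-loop solution from~\cite{BergLe18a}, because the system you feed into it is not in the admissible class.

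Two further points in your plan would fail as stated. First, you propose to verify~(N4)\,c) for an operator containing $f_\eta$ via Gr\"onwall ``using continuity of $f_\eta$''; but $f$ is only assumed continuous and bounded, with no Lipschitz condition, so no such estimate is available --- in the paper $T$ contains no nonlinearity at all, which is why~(N4) is immediate. Second, $R_i(t)$ and, above all, $\Gamma L(t)K(t)$ cannot simply be ``packaged into a bounded disturbance'': the $R_i$ terms multiply the state and belong to~$T$, and the gain must be realized as a continuous matrix function of a disturbance with the positivity in~(N3) holding for \emph{all} arguments, not merely along the trajectory; the paper constructs for this a map $g$ from a pointwise Cholesky factorization of $\Gamma L K+(\Gamma L K)^\top-\tfrac{\alpha}{2}I_p$ together with the skew-symmetric part, so that $g(x,z)+g(x,z)^\top\ge\tfrac{\alpha}{2}I_p$ globally. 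These last two items are repairable details, but combined with the unresolved absorption/existence step they constitute a genuine gap.
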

\begin{proof} We proceed in several steps.

\emph{Step 1}: We show existence of a solution of~\eqref{eq:ABC},~\eqref{eq:fun-con},~\eqref{eq:fb} and that it can be extended to a maximal solution. By assumptions~(P1)--(P3) it follows from Theorem~\ref{Thm:BIF} that the state space transformation $\big(y(t)^\top, \dot y(t)^\top,\ldots,y^{(r-1)}(t)^\top,\eta(t)^\top)^\top := U(t) x(t)$ puts system~\eqref{eq:ABC} into the form~\eqref{eq:hatABC-y-eta}. Set $v(t) := -k_{r-1}(t) e_{r-1}(t)$, then $u(t) = K(t) v(t)$. Using the same technique as in Step~1 of the proof of~\cite[Thm.~3.1]{BergLe18a} we find that there exist a relatively open set $\cD\subseteq \R_{\ge 0}\times \R^{rp}$ and $G:\cD\to \R^p$ such that
\[
    v(t) = - \frac{G\big(t,y(t), \dot y(t), \ldots,y^{(r-1)}(t)\big)}{1-\varphi^2_{r-1}(t) \|G\big(t,y(t),\dot y(t), \ldots,y^{(r-1)}(t)\big)\|^2}
\]
and $\big(0,y(0),\dot y(0),\ldots,y^{(r-1)}(0)\big) \in \cD$. Using the notation $Y(t) = \big(y(t),\dot y(t), \ldots,y^{(r-1)}(t)\big)$
and $\Xi(t) = \left(t,Y(t),\eta(t),\tfrac{-K(t) G(t,Y(t))}{1-\varphi^2_{r-1}(t) \|G(t,Y(t))\|^2}\right)$, the closed-loop system~\eqref{eq:ABC},~\eqref{eq:fun-con},~\eqref{eq:fb} can be reformulated as, invoking that $N(t) K(t) = 0$ by~\eqref{eq:con-ass-2},
\begin{equation}\label{eq:CL}
\begin{aligned}
  y^{(r)}(t) &\!=\! \sum\nolimits_{i=1}^r R_i(t) y^{(i-1)}(t) \!+\! S(t)\eta(t) \!-\!  \tfrac{\Gamma L(t) K(t) G(t,Y(t))}{1-\varphi^2_{r-1}(t) \|G(t,Y(t))\|^2} \!+\! f_r\left(\Xi(t)\right)\\
   \dot \eta(t) &= \sum\nolimits_{i=1}^r P_i(t) y^{(i-1)}(t) + Q(t)\eta(t) + f_\eta\left(\Xi(t)\right).
\end{aligned}
\end{equation}
It is clear that~\eqref{eq:CL} can be reformulated as a first-order system
\begin{align*}
    \ddt \begin{smallpmatrix} Y(t)\\ \eta(t)\end{smallpmatrix} &= F\left(t,\begin{smallpmatrix} Y(t)\\ \eta(t)\end{smallpmatrix}\right),\
    \begin{smallpmatrix} Y(0)\\ \eta(0)\end{smallpmatrix} = U(0) x^0,
\end{align*}
with a suitable continuous function $F: \cD\times\R^{n-rp}\to \R^n$. Furthermore, $(0,U(0)x^0)\in \cD\times\R^{n-rp}$ and $\cD\times\R^{n-rp}$ is relatively open in $\R_{\ge 0}\times\R^n$. Hence, by~\cite[\S\,10, Thm.~XX]{Walt98} there exists a weakly differentiable solution of~\eqref{eq:CL} satisfying the initial conditions and every solution can be extended to a maximal solution; let $(Y,\eta):[0,\omega)\to\R^{n}$, $\omega\in(0,\infty]$, be such a maximal solution.

\emph{Step 2}: We show that $(Y,\eta)$ also solves a closed-loop system which is of the form~\eqref{eq:nonlSys},~\eqref{eq:fun-con} and $u(t) = v(t)$ in~\eqref{eq:nonlSys}. Set $d_1(t) := f_\eta\left(\Xi(t)\right)$
for $t\in[0,\omega)$ and let $\Phi_Q(\cdot,\cdot)$ be the transition matrix of the linear time-varying system $\dot \eta(t) = Q(t) \eta(t)$. Then the variation of constants formula yields that
\begin{align*}
    \eta(t) & = \Phi_Q(t,0)\eta(0) + \int_0^t \Phi_Q(t,s) \left( \sum\nolimits_{i=1}^r P_i(s) y^{(i-1)}(s) + d_1(s)\right) \ds{s}
\end{align*}
for all $t\in[0,\omega)$. Set
\begin{align*}
   d(t) &:= S(t)\Phi_Q(t,0) \eta(0) + \int_0^t S(t) \Phi_Q(t,s) d_1(s) \ds{s} + f_r\left(\Xi(t)\right)
\end{align*}
for $t\in[0,\omega)$. Since $S$, $f_\eta$ and $f_r$ are bounded and $\dot \eta(t) = Q(t) \eta(t)$ is uniformly exponentially by assumptions~(P1)--(P4), it follows that~$d$ is bounded on $[0,\omega)$. If $\omega<\infty$, we define $d(t):=0$ for $t\ge \omega$ and obtain $d\in\cL^\infty(\R_{\ge 0}\to\R^p)$. Define the operator $T:\mathcal{C}([0,\infty)\rightarrow\R^{rp})\rightarrow \mathcal{L}_{\rm loc}^{\infty}(\R_{\ge 0}\rightarrow \R^p)$ by
\begin{align*}
  T(\zeta_1,\ldots,\zeta_r)(t) \!=\! \sum\nolimits_{i=1}^r  R_i(t) \zeta_i(t) \!+\!  \sum\nolimits_{i=1}^r \int_0^t S(t) \Phi_Q(t,s)  P_i(s) \zeta_i(s) \ds{s}
\end{align*}
for $t\ge 0$. Then we have $y^{(r)}(t) = T(Y)(t) + d(t) + \Gamma L(t) K(t) v(t)$ for almost all $t\in[0,\omega)$. Finally, we seek a function $g\in\cC(\R^\ell\to\R^{p\times p})$, $\ell\in\N$, and a bounded function $\bar d\in\cC^\infty(\R\to\R^\ell)$ such that $g\big(\bar d(t)\big) = \Gamma L(t) K(t)$ for all $t\ge 0$ and $g(x) + g(x)^\top > 0$ for all $x\in\R^\ell$. The construction is as follows: By assumption~\eqref{eq:con-ass-2} we have that $A (t) := \Gamma L(t) K(t) + \big( \Gamma L(t) K(t) \big)^\top - \tfrac{\alpha}{2} I_p > 0$ for all $t\ge 0$, hence there exists a pointwise Cholesky decomposition $A(t) = H(t)H(t)^\top$. For $x=(x_{11},\ldots,x_{1p}, x_{21},\ldots,x_{pp})^\top \in \R^{p^2}$ set $\cM(x) = \begin{smallbmatrix} x_{11} & \cdots & x_{1p}\\ \vdots && \vdots\\ x_{p1} & \cdots & x_{pp}\end{smallbmatrix}$ and define, for $\alpha>0$ as above,
\begin{align*}
    g_1:\ &\R^{p^2}\to\R^{p\times p},\ x \mapsto \tfrac12 \cM(x) \cM(x)^\top + \tfrac{\alpha}{4} I_p,\\
    g_2:\ &\R^{p^2}\to\R^{p\times p},\ x \mapsto \tfrac12 \left(\cM(x) - \cM(x)^\top \right).
\end{align*}
Let $H(t) = \big(h_{ij}(t)\big)_{i,j=1,\ldots,p}$ and $\Gamma L(t) K(t) = \big(k_{ij}(t)\big)_{i,j=1,\ldots,p}$, then
\begin{align*}
    &g_1\big(h_{11}(t),\ldots,h_{pp}(t)\big) &=\ & \tfrac12 H(t) H(t)^\top  + \tfrac{\alpha}{4} I_p\\
    && =\ & \tfrac12\left( \Gamma L(t) K(t) + \big( \Gamma L(t) K(t) \big)^\top \right),\\
& g_2\big(k_{11}(t),\ldots,k_{pp}(t)\big) &=\ &  \tfrac12\left( \Gamma L(t) K(t) - \big( \Gamma L(t) K(t) \big)^\top \right),\quad \text{thus}\\
&g_1\big(h_{11}(t),\ldots,h_{pp}(t)\big) &+\ & g_2\big(k_{11}(t),\ldots,k_{pp}(t)\big)= \Gamma L(t) K(t).
\end{align*}
Define $g:\R^{p^2}\times\R^{p^2}\to\R^{p\times p}, (x,z)\mapsto g_1(x) + g_2(z)$, then we find that $g(x,z) + g(x,z)^\top  = g_1(x) + g_1(x)^\top + g_2(z) + g_2(z)^\top = g_1(x) + g_1(x)^\top \ge \tfrac{\alpha}{2} I_p > 0$ for all $x, z\in\R^{p^2}$. With the bounded function
\[
    \bar d(\cdot) := \big(h_{11}(\cdot),\ldots,h_{pp}(\cdot), k_{11}(\cdot),\ldots,k_{pp}(\cdot)\big) %\\ \in\cC^\infty(\R\to\R^{2p^2})
\]
we finally obtain that the solution $(Y,\eta)$ from Step~1 satisfies
\begin{equation}\label{eq:CL2}
    y^{(r)}(t) = T(Y)(t) + d(t) + g\big(\bar d(t)\big) v(t)
\end{equation}
for almost all $t\in[0,\omega)$, where the input $v(t) = -k_{r-1}(t) e_{r-1}(t)$ is obtained from the controller~\eqref{eq:fun-con}.\\
Invoking boundedness of~$d$ and~$\bar d$, system~\eqref{eq:CL2} satisfies assumptions~(N1) and~(N2). Assumption~(N3) is a consequence of the construction of~$g$. The operator~$T$ is clearly causal and locally Lipschitz. By~(P4), $T$ maps bounded trajectories to bounded trajectories and therefore~\eqref{eq:CL2} satisfies condition~(N4).

\emph{Step 3}:  By Steps~1 and~2, the maximal solution $(Y,\eta)$ is also a solution of~\eqref{eq:CL2} with~\eqref{eq:fun-con} and  $v(t) = -k_{r-1}(t) e_{r-1}(t)$, hence~\cite[Thm.~3.1]{BergLe18a} yields that it can be extended to a global solution, i.e., $\omega = \infty$. Statements~(ii) and~(iii) are consequences of~\cite[Thm.~3.1]{BergLe18a} as well.
\end{proof}

\begin{Rem}
We like to point out that a drawback of the controller design~\eqref{eq:fun-con},~\eqref{eq:fb}, which still needs to be resolved, is that the derivatives of the output must be available for the controller. However, there are several applications where this condition is not satisfied, and it may even be hard to obtain suitable estimates of the output derivatives, in particular in the presence of measurement noise. A first approach to treat these problems using a ``funnel pre-compensator'' has been developed in~\cite{BergReis18b, BergReis18a} for systems with relative degree $r\in\{2,3\}$, and was successfully applied to multibody systems in~\cite{BergOtto19}.
\end{Rem}

\vspace*{-3mm}

%
%%%%%%%%%%%%%%%%%%%%%%%%%%%%%%%%%%%%%%%%%%%%%%%%%%%%%%%%%%%%%%%%%%%%%%%%%%%%%%%%%%%%%%%%%%%%%%%
\subsection{Discussion of controller weight matrix}\label{Ssec:ConGain}
%%%%%%%%%%%%%%%%%%%%%%%%%%%%%%%%%%%%%%%%%%%%%%%%%%%%%%%%%%%%%%%%%%%%%%%%%%%%%%%%%%%%%%%%%%%%%%%
%

We discuss possible choices for the bounded controller weight matrix~$K\in\cC^\infty(\R\to\R^{m\times p})$ satisfying~\eqref{eq:con-ass-2}. We distinguish the two cases $\rk BL(t) = p$ and $\rk BL(t) = q > p$. In practical applications, it is frequently the case that some actuators are used to perform similar control tasks or they can be divided into~$p$ groups of actuators with the same physical characteristics, where~$p$ is the number of outputs, see e.g.~\cite{TaoChen04}. Due to this redundancy it may be assumed (and actually is quite probable) that in each group at least one actuator remains (partially) functional, i.e., does not experience a total fault. This means that we are in the case $\rk BL(t) = p$. An interesting and relevant example is mentioned in~\cite[p.~103]{TaoChen04}.
%:\\[1mm]
%{\small
%``A typical example is modern transport aircraft, which have two or more
%engines. In longitudinal motion control, the engines are used for forward
%speed control, and the elevator and stabilizer are used for pitch rate control
%(in normal flight or emergency situations). In some special designs, the elevator
%and stabilizer may consist of multiple independently operated segments
%in order to provide redundancy. We can consider the engines as one group of
%inputs and the elevator and stabilizer (possibly segmented) as another group.''
%}\\[1mm]

If there are~$q$ groups of actuators and~$p$ outputs with $q>p$, then the system typically has an unnecessary high redundancy. When it is still possible to guarantee that at least one actuator without total fault remains in each group, then complete groups of actuators may be switched off so that $q=p$ is achieved. %Otherwise, some knowledge of the failures, i.e., of the actuator reliability matrix~$L$ from~(P1) is inevitable as discussed below.

%%%
\subsubsection{The case $\rk BL(t) = p$}
%%%

Under the assumptions~(P1)--(P3) it follows from Theorem~\ref{Thm:BIF} that in the case $q=p$ we have $N=0$, so the second condition in~\eqref{eq:con-ass-2} is satisfied for any choice of~$K$. In order to satisfy the first condition in~\eqref{eq:con-ass-2}, a possible choice is $K(t) = \Gamma^\top$ and the requirement that there exists $\alpha>0$ such that $\Gamma \big(L(t) + L(t)^\top\big) \Gamma^\top \ge \alpha I_p$ for all $t\in\R$. This condition means that we have at least~$p$ linearly independent actuators, the reliability of which does not converge to zero. In other words, in each group of actuators at least one remains functional, see the discussion above. Clearly, for this specific choice of~$K$ we have to assume that the high frequency gain matrix~$\Gamma$ of~\eqref{eq:ABC} is known; apart from that, no knowledge of the system parameters is required.

%%%
\subsubsection{The case $\rk BL(t) = q > p$}
%%%

Under the assumptions~(P1)--(P3) it follows from Theorem~\ref{Thm:BIF} and Remark~\ref{Rem:ZD} that in the case $q>p$ there exists~$K$ such that $\Gamma L(t) K(t)$ is invertible and $N(t) K(t) = 0$ for all $t\in\R$ if, and only if, condition~\eqref{eq:cond-K-Gamma} is satisfied. In this case, $K(t)$ as in~\eqref{eq:K(t)=} is a feasible choice which satisfies $\Gamma L(t) K(t) = I_p$ and hence~\eqref{eq:con-ass-2} holds true. However, this requires knowledge of the system parameters and of the reliability matrix function~$L$ from~(P1).

\vspace*{-2.5mm}
%
%%%%%%%%%%%%%%%%%%%%%%%%%%%%%%%%%%%%%%%%%%%%%%%%%%%%%%%%%%%%%%%%%%%%%%%%%%%%%%%%%%%%%%%%%%%%%%%
\section{Simulation}\label{Sec:Sim}
%%%%%%%%%%%%%%%%%%%%%%%%%%%%%%%%%%%%%%%%%%%%%%%%%%%%%%%%%%%%%%%%%%%%%%%%%%%%%%%%%%%%%%%%%%%%%%%
%

We illustrate the fault tolerant funnel controller~\eqref{eq:fun-con},~\eqref{eq:fb} by applying it to the model of the Boeing 737 aircraft from Example~\ref{Ex:runex1}. As reference trajectories we choose $y_{\rm ref,1}(t) = 2\sin t$, $y_{\rm ref,2}(t) = \cos t$, the initial value is $x(0) = 0$, and the funnel functions are $\varphi_0(t) = (5 e^{-t} + 0.1)^{-1}$ and $\varphi_1(t) = \big(\tfrac52 e^{-0.5\, t} + 0.1\big)^{-1}$, hence~\eqref{eq:con-ass-1} is satisfied. Obviously, the initial errors lie within the respective funnel boundaries, i.e.,~\eqref{eq:ini-err} is satisfied. The controller weight matrix is chosen as $K(t) = \Gamma^\top$. For the simulation, we assume that the actuator $d_{r2}$ has a slowly decreasing efficiency to $50\%$ of the original capability on the time interval $[0,6]$ and at $t=6$ another fault occurs so that we have an actuator saturation by~1 (which means an effective saturation by~$0.5$ due to the $50\%$ reduction of efficiency). Using the smooth error function \textit{erf} and the complementary error function \textit{erfc} (note that all derivatives of \textit{erf} and \textit{erfc} are bounded), which are implemented in MATLAB, this behavior can be modelled by
\begin{align*}
  l_2(t) &= \tfrac14 \erfc(t-3)+ \tfrac14 \erfc\big(100(t-6)\big),\\
  f_2(t,u_2) &= \tfrac14 \big(1+\erf\big(100(t-6)\big)\big) \cdot \sat_1(u_2),
\end{align*}
where $\sat_1(v) = \sgn(v)$ for $|v|\ge 1$ and $\sat_1(v) = v$ for $|v| < 1$. We further assume that the actuator $d_{a2}$ has a sudden total fault at $t=7$, which can be modelled by $l_4(t) = \tfrac12 \erfc\big(20(t-7)\big)$ and $f_4(t,u_4) = 0$. After the faults, the effective input actions are $u_1(t) = d_{r1}(t)$, $u_2(t) = l_2(t) d_{r2}(t) + f_2\big(t,d_{r2}(t)\big)$, $u_3(t) = d_{a1}(t)$ and $u_4(t) = l_4(t) d_{a2}(t)$.

%\begin{minipage}{.25\textwidth}
%\resizebox{1.1\textwidth}{!}{
%\centering
%\hspace{-10mm}\includegraphics[width=8cm]{rels}
%}
%\vspace{-5mm}
%\captionof{figure}{Reliability functions corresponding to actuators~$d_{r2}$ and~$d_{a2}$.}
%\label{fig:rels}
%\end{minipage}
%\hspace{1mm}
%\begin{minipage}{.2\textwidth}
%see Fig.~\ref{fig:rels} for a plot of~$l_2(t)$ and $f_2(t,1)$. We further assume that the actuator $d_{a2}$ has a sudden total fault at $t=7$, which can be modelled by $l_4(t) = \tfrac12 \erfc\big(20(t-7)\big)$ and $f_4(t,u_4) = 0$, see Fig.~\ref{fig:rels} for a plot of~$l_4(t)$. After the faults, the effective input actions are
%\end{minipage}
Since the actuators $d_{r1}$ and $d_{a1}$ are assumed to experience no faults, condition~\eqref{eq:con-ass-2} is clearly satisfied. It further follows from Example~\ref{Ex:runex1} that (P1)--(P4) are satisfied. Therefore, fault tolerant funnel control is feasible by Theorem~\ref{Thm:FunCon-1}. The simulation of the controller~\eqref{eq:fun-con},~\eqref{eq:fb} applied to the model of the Boeing 737 aircraft from Example~\ref{Ex:runex1} over the time interval $[0,10]$ has been performed in MATLAB (solver: {\tt ode45}, rel.\ tol.: $10^{-14}$, abs.\ tol.: $10^{-10}$) and is depicted in Fig.~\ref{fig:sim}. The decreasing efficiency of~$u_2$ can clearly be seen as well as that it is saturated by~$0.5$ on the interval $[6,10]$; the saturation is active on the interval $[7,9]$. Furthermore, it can be seen that at $t=7$, $u_4$ has a total fault and hence~$u_3$ needs to increase in order to compensate for this. The tracking performance is not affected at all by these faults; the tracking errors evolve within the prescribed performance funnels.

\vspace{-5mm}
\captionsetup[subfloat]{labelformat=empty}
\begin{figure}[h!tb]
  \centering
  \subfloat[Fig.~\ref{fig:sim}a: Funnel and tracking errors]
{
\centering
  \includegraphics[width=7.4cm]{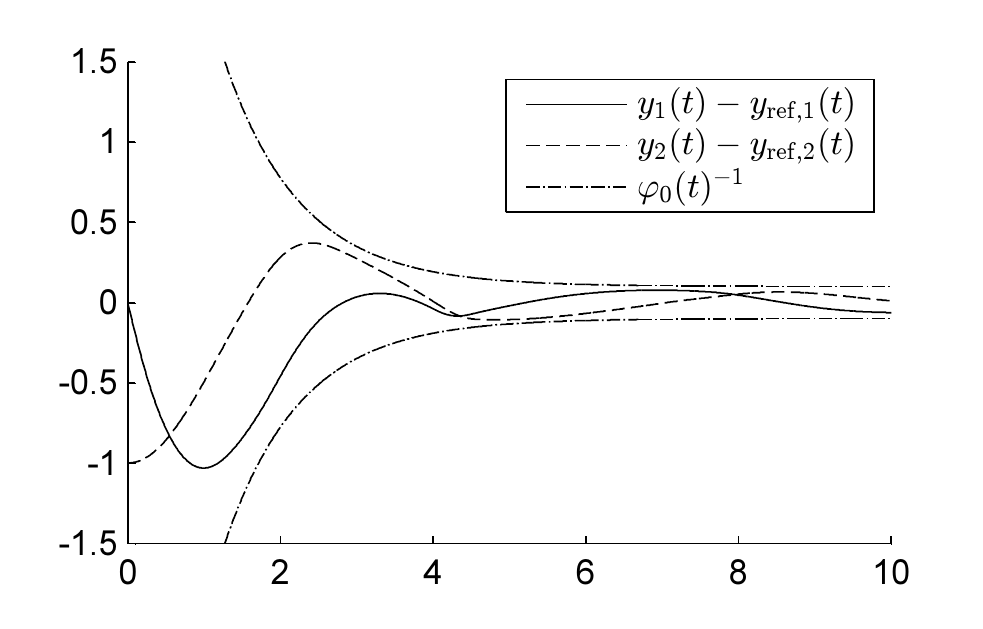}
\label{fig:sim-e}
}\\[-1mm]
%\vspace{5mm}
\subfloat[Fig.~\ref{fig:sim}b: Input functions]
{
\centering
 \hspace*{-5mm} \includegraphics[width=7.4cm]{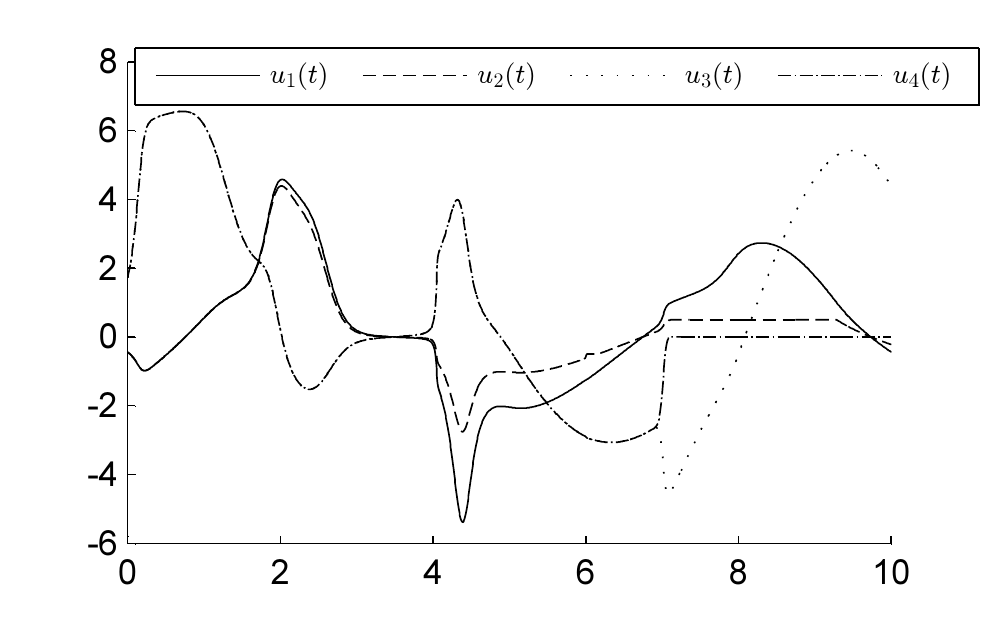}
\label{fig:sim-u}
}
\vspace*{-1mm}
\caption{Simulation of the controller~\eqref{eq:fun-con},~\eqref{eq:fb} for the Boeing 737 aircraft.}
\label{fig:sim}
\end{figure}

\vspace*{-7mm}

\bibliographystyle{ieeetr}
%\bibliography{..//..//MSTbib//MST}
%\bibliography{MST}

%\vspace*{-5mm}
%
%
%\begin{IEEEbiography}[{\includegraphics[width=1in,height=1.25in,clip,keepaspectratio]{thomas.jpg}}]{Thomas Berger} was born in Germany in 1986. He received his B.Sc.~(2008), M.Sc.~(2010), and Ph.D.~(2013), all in Mathematics and from Technische Universit\"at Ilmenau, Germany. From 2013 to
%2018 Dr.~Berger was a postdoctoral researcher at the Department of Mathematics, Universit\"at Hamburg, Germany. Since January 2019 he is a Juniorprofessor at the Institute for Mathematics, Universit\"at Paderborn, Germany. His research interest encompasses differential algebraic systems, systems and control theory, multibody dynamics and electrical circuits.
%
%Dr.~Berger received several awards for his dissertation including the ``2015 European Ph.D. Award on Control for Complex and Heterogeneous Systems'' from the European Embedded Control Institute and the ``Dr.-Körper-Preis 2015'' from the International Association of Applied Mathematics and Mechanics. Since 2015 he serves as an Associate Editor for the IMA Journal of Mathematical Control and Information.
%\end{IEEEbiography}

\end{document}